\title{Computationally Efficient Dynamic Traffic Optimization Of Railway Systems}
\author{
	Robin~Vujanic,
	and Andrew~J.~Hill,
	\thanks{R. Vujanic and A. Hill are with the Australian Centre for Fields Robotics,
		School of Aerospace, Mechanical and Mechatronic Engineering, The University of Sydney, Australia, e-mail: robin@acfr.usyd.edu.au}
	\thanks{Manuscript received November 27, 2019; revised --.}
}
\newcommand{\tim}{t}
\newcommand{\stag}{k}
\newcommand{\Pp}{\mathcal{P}}
\newcommand{\Pfitzr}{\mathcal{P}(\tim, \xt, \ff)}
\newcommand{\Pfittzr}{\mathcal{P}(\tim, \xt, \ff_\tim)}
\newcommand{\Pfitttzzr}{\mathcal{P}(\tim + \Delta \tim, \xzrtt, \ff_{\tim + \Delta \tim})}
\newcommand{\Pfitzrtilde}{\mathcal{P}(\tim, \xt, \tilde{\ff}_\tim)}
\newcommand{\Pfitttzzrminus}{\mathcal{P}(\tim - \Delta \tim, x_{\tim - \Delta \tim}, \ff_{\tim - \Delta \tim})}
\newcommand{\PFitzr}{\mathcal{P}(\tim, \xt, \FF)}
\newcommand{\PFittzzr}{\mathcal{P}(\tim + \Delta \tim, \xzrtt, \FF)}
\newcommand{\safex}{x^\mathrm{safe}}
\newcommand{\xt}{x_t}
\newcommand{\xzrtt}{x_{\tim + \Delta \tim}}
\newcommand{\niinit}{\underline{n}_i}
\newcommand{\liinit}{\underline{l}_i}
\newcommand{\wiinit}{\underline{w}_i}
\newcommand{\yiarg}[1]{y_i[#1]}
\newcommand{\yjarg}[1]{y_j[#1]}
\newcommand{\yit}{y_{i}[\stag]}
\newcommand{\yiz}{y_{i}[0]}
\newcommand{\yio}{y_{i}[1]}
\newcommand{\yitm}{y_{i}[\stag-1]}
\newcommand{\eit}{e_i[\stag]}
\newcommand{\et}{e[\stag]}
\newcommand{\eizr}{e_i[0]}
\newcommand{\eio}{e_i[1]}
\newcommand{\eiTm}{e_i[\Fi-1]}
\newcommand{\nit}{n_i[\stag]}
\renewcommand{\ni}[1]{n_i[#1]}
\newcommand{\nt}{n[\stag]}
\newcommand{\nizr}{n_i[0]}
\newcommand{\nio}{n_i[1]}
\newcommand{\niT}{n_i[\Fi]}
\newcommand{\safen}[1]{n^\mathrm{safe}_{#1}}
\newcommand{\FF}{F}
\newcommand{\ff}{f}
\newcommand{\ffi}{f_i}
\newcommand{\tie}{\stag_i[e]}
\newcommand{\te}{\stag[e]}
\newcommand{\tia}[1]{\stag_i[#1]}
\newcommand{\tin}{\tia{n}}
\newcommand{\tn}{\stag[n]}
\newcommand{\Fi}{F_i}
\newcommand{\Teit}{\tau^\mathrm{edge}_{i,e[\stag]}}
\newcommand{\Teiz}{\tau^\mathrm{edge}_{i,e[0]}}
\newcommand{\Tei}{\tau^\mathrm{edge}_{i,e}}
\newcommand{\Tej}{\tau^\mathrm{edge}_{j,e}}
\newcommand{\Teitm}{\tau^\mathrm{edge}_{i,e[\stag-1]}}
\newcommand{\Ei}{e_i}
\newcommand{\Ej}{e_j}
\newcommand{\Ni}{n_i}
\newcommand{\Niarg}[1]{n_{#1}}
\newcommand{\Nj}{n_j}
\newcommand{\zedgearg}[1]{z^{\mathrm{edge}}_{#1}}
\newcommand{\znodearg}[1]{z^{\mathrm{node}}_{#1}}
\newcommand{\zslotarg}[1]{z^{\mathrm{slot}}_{#1}}
\newcommand{\zedge}{z^{\mathrm{edge}}}
\newcommand{\znode}{z^{\mathrm{node}}}
\newcommand{\zslot}{z^{\mathrm{slot}}}
\newtheorem{theorem}{Theorem}
\newtheorem{proposition}{Proposition}
\newtheorem{remark}{Remark}
\newtheorem{definition}{Definition}
\newtheorem{example}{Example}
\begin{document}

\maketitle

\begin{abstract}
In this paper, we investigate dynamic traffic optimization in railway systems, i.e., the behavior of these systems through time when their movements are dictated by solutions to optimization models with finite horizons. As interactions between trains are not considered beyond the limits of finite horizons, the danger of leading the system into a deadlock arises. In this paper we present new procedures to establish finite prediction horizons that are formally guaranteed to operate the system in a way that is compatible with the physical constraints of the network while avoiding deadlocking and minimizing computations. The key to this result is the notion of \textit{recursive feasibility}. This paper introduces conditions sufficient to attain it. We then discuss several important ramifications of recursive feasibility that enable efficient computations. We examine the possibility of  decomposing the underlying optimization models into smaller models with shorter horizons, or into models that only consider subsets of all trains. We also discuss warm starting and anytime approaches. We finally perform numerical experiments verifying these results on models that include a real-world railway system used for freight transport. On harder instances, some of our approaches outperform solving the same models as monolithic MILPs by more than two order of magnitude in terms of median computation times, while also achieving better worst--case optimality gaps.


\end{abstract}

\section{Introduction}

\begin{table}[!htbp]
	\centering
	\begin{tabular}{p{0.22\linewidth}p{0.67\linewidth}}
		\hline
		Symbol & Meaning\\
		\hline
		$i \in I$ & Reference to train $i$ in the set of all trains $I$\\
		$l \in L_n$ & Reference to the $l$-th slot at node $n$\\
		$\Ni, \Ei$ & List of nodes and edges on the path of train $i$ to its destination\\
		$\xt$ & State of the system at time $t$\\
		$\wiinit \in  \left[0,1\right]$ & Measured fraction of the first edge in train $i$'s path that has already been traversed at time $t$. It is $0$ if the train is dwelling at a node\\
		$\tie \in \mathbb{N}$ & Index of edge $e$ in $\Ei$\\
		$\tin \in \mathbb{N}$ & Index of node $n$ in $\Ni$\\
		$\Fi \in \mathbb{N}$ & Index of the terminal node for train $i$ in $\Ni$\\
		$\FF$ & Stacked list of number of nodes to terminal for each train, $\FF \doteq \left(\Fi\right)_{i \in I}$\\
		$\ffi \in \mathbb{N}$ & Number of nodes in the optimization horizon of train $i$\\
		$\ff_{i,t} \in \mathbb{N}$ & Number of nodes in the horizon of train $i$ in the optimization model constructed at $t$\\
		$\ff_t$ & Stacking of the optimization horizons into a list, $\ff_\tim \doteq \left(\ff_{i,\tim}\right)_{i \in I}$\\
		$\tau^{\mathrm{edge}}_{i,e} \in \mathbb{R}_+$ & Travel time of train $i$ on edge $e$\\
		$\Pfittzr$ & Optimization model constructed at time $t$, when the state of the system is $\xt$ and under the optimization horizons determined by $\ff_t$
		\\
		\hline
		Opt. Variable & Meaning\\
		\hline
		$\yit \in \mathbb{R}_+$ & Departure time of the $k$-th transit stage for train $i$ \\
		$\zedgearg{i,j,e} \in \left\{0,1\right\}$ & Is $1$ if train $i$ transits over edge $e$ before train $j$, $0$ otherwise\\
		$\znodearg{i,j,n} \in \left\{0,1\right\}$ & Is $1$ if train $i$ transits over node $n$ before train $j$, $0$ otherwise\\
		$\zslotarg{i,n,l} \in \left\{0,1\right\}$ & Is $1$ if train $i$ occupies slot $l$ at node $n$, $0$ otherwise\\
	\end{tabular}
	\caption{Summary of symbols used in the paper.}
	\label{tab:list_of_symbols}
\end{table}




The optimization of schedules for railway systems has attracted a significant amount of research over the decades. Since many of the models in the literature entail decisions that are logical in nature (e.g., which train has precedence) the corresponding optimization models are typically NP-hard combinatorial or mixed--integer~\cite{laumanns_train, dariano_bandb, dariano_macroscopic, deshutter_mpc_railway}. The computations required to solve these models are often a significant impediment to their practical applicability, in particular on extensive networks involving large train fleets.


To allow computations, these models typically only consider finite optimization horizons. Since trains are unaware of each other beyond the limits of these optimization horizons, however, the risk of driving the system into a deadlock\footnote{A deadlock is a situation where trains cannot pass or reach their destination without at least one reversing. A deadlock can occur well before trains meet, if the set of forward-moving actions will always ultimately result in them meeting and being unable to pass; see Section~\ref{sec:recursive_feasibility}.} arises if the terminal state of the trains does not satisfy certain provisions. A primary goal of this paper is a thorough analysis of this issue. Borrowing notions from control systems theory, more precisely model predictive control~\cite{morari_mpc_book}, we establish conditions sufficient to guarantee a deadlock--free operation of the network, when it is controlled through time by the results of traffic scheduling models with finite horizons. We present an algorithm to dynamically dimension optimization horizons to achieve this. 

The key idea developed in this paper is that of a \emph{safe state}, and how it is a sufficient condition to guarantee \textit{recursive feasibility}. After presenting an optimization model for scheduling train traffic in Section~\ref{sec:opt_model_static}, we discuss in Section~\ref{sec:closed_loop_control} how recursive feasibility, in turn, guarantees deadlock--free operations when trains transit according to schedules that are updated through time, as they move, and with horizons stretching further and further into the future. This guarantee does not require solutions' optimality, meaning that computations can be safely interrupted as soon as a first feasible solution is obtained. As a by--product we derive an algorithm for early detection of deadlocks.


In Section~\ref{sec:computational_ramifications_rf} we discuss several important by-products of this analysis, that are of interest when addressing the computational challenges mentioned above. Further, we discuss methods to divide and solve optimization programs with longer horizons as a sequence of optimization models with shorter horizons. Additionally, we explore the possibility of warm-starting the optimization procedures when previous solutions are available; we demonstrate with a counterexample that this is not generally possible, as previous solutions can become invalid as time passes even in the absence of disturbances and with perfect information of travel times. We provide conditions to avoid these issues and ensure successful warm-starting and analyze how these ideas can be used as a basis for the development of anytime solution strategies. Last, we show how the results presented in the paper enable solution procedures that only consider arbitrary subsets of the train fleet at a time.

To the best of the authors' knowledge, the notion of safe states and recursive feasibility presented in Section~\ref{sec:closed_loop_control}, as well as their application to the computational techniques introduced in Section~\ref{sec:computational_ramifications_rf}, are new in the context of railway systems.

Finally, in Section~\ref{sec:results} we discuss computational experiments involving both a synthetic as well as a real--world railway network used for freight transport. For the harder instances involving the real--world network model, median compute times of some decomposition approaches presented in this paper surpass solving the same optimization models monolithically as MILPs using Gurobi by more than two orders of magnitude. Our approaches also achieve superior worst case optimality gaps for these harder instances.

\section{Literature Review}

Model predictive controllers (MPC) have been used for decades to control slow plants, such as chemical processing plants~\cite{morari_ppf}. Owing to the advances in the performance of modern computers, in the recent decade its application has been extended to the control of fast real--time systems, including power converters~\cite{mariethoz_fast_mpc_pc}, robotic applications~\cite{robots_mpc} and autonomous vehicles~\cite{mpc_autonomous_vehicles}. The application of MPC to railway systems has also been investigated in~\cite{laumanns_train}, which concentrates on the operation of a central railway station area, as well as in~\cite{deschutter_mpc_rail} where the authors examine the application of linear complementarity problems to describe the system. The stability for MPC is well understood~\cite{morari_mpc_book, rawlings2017model}, and relies on the construction of appropriate terminal sets or costs. For linear, time--invariant systems subject to linear constraints, systematic procedures have been developed to compute these terminal sets and costs~\cite{morari_mpc_book}. However, no general result is available to achieve the same guarantees for systems entailing logic decisions. They usually have to be derived and tailored for the specific application at hand. One example of this is in~\cite{recursive_feas_airways}, where the authors establish appropriate terminal sets in the context of airway trajectory planning.

The literature on optimization models for railway systems is vast. The survey in~\cite{tornquist2007n} categorizes papers based on whether they only consider railway lines or generic networks, whether non-station segments can have bi-directional or only uni-directional traffic, the maximum number of tracks per segment (single, double or $N$ tracked segments) and whether meet points and stations are assumed to have finite or infinite capacity. The MILP model presented in this paper is similar to the one introduced in~\cite{tornquist2007n}, allowing for single and double tracks, bi-directional traffic, and the capacity at intermediate stations, determined by the number of available parallel tracks, is explicitly taken into account. 

In~\cite{goverde2007railway, goverde2010delay}, the authors look at the application of max-plus algebras and max-plus linear systems to achieve macroscopic models of delay propagation, which are then incorporated within optimization programs. The work in~\cite{laumanns_train} proposes a dispatching assistant tool based on a linear, binary optimization model that assigns precomputed blocking-stairways to trains as a mechanism to reserve track resources and concurrently satisfy connection and platform related constraints while maximizing customer satisfaction. In~\cite{suhl2001design}, the authors present different approaches for modeling waiting policies, ranging from methods based on exact optimization to macroscopic, agent-based simulations, without, however, considering capacity constraints. The work in~\cite{dariano_bandb, dariano_macroscopic} introduces alternative arc graphs to model and optimize train schedules. This abstraction maps the current state of the railway system and the trajectories of the trains into a graph that represents all conflicts related to the precedence of resource usage. These reflect the temporal inter dependencies of the operations that have been executed (segment traversal, stops at stations, etc.) for the trains to get to their destinations. 
 
A related approach is in~\cite{weymann2011qualitat}, in which decision variables are introduced to encode train sequencing but the continuous part of the model includes variable speed profiles. The main potential drawback of this approach is the required growing number of optimization variables. 
The work in~\cite{schlechte2011micro} analyzes approaches to systematically simplify excessively detailed network models for timetabling purposes. Other approaches that deal with computational aspects of solving macroscopic models are in~\cite{norio2005train}, which uses simulated annealing, and~\cite{tornquist2007n} who implement several heuristics, depending on the size of the instances, including computationally cheap greedy strategies. Similar to our paper, the research presented in~\cite{tornquist_kraseman} is also focused on reducing computational complexity, although their work concerns rescheduling problems after traffic perturbations. The scheme proposed is based on fast heuristics to determine prioritizations, and the authors tackle issues related to deadlocking by implementing a backtracking strategy in their search tree that changes the value of previously set binary decisions when they are found to lead to inconsistencies. Our approach constructs solutions in the sub problems that are guaranteed to avoid deadlocking, thus avoiding the necessity of performing `backtracking' as we solve optimization models through time.

Previous research on decomposition techniques applied to railway systems optimization is also present. The work in~\cite{kersbergen} discusses an application of distributed model predictive control. The scheme proposed results in a geographical decomposition, in contrast to our work which separates the problem in time or by trains. Additionally, their approach hinges on reordering the constraint matrix of the model using an MIQP. The objective of this reordering is to obtain a constraint matrix that is as close as possible to block-diagonal; for this scheme to work it is necessary that the coupling constraints outside of the blocks in the diagonal only entail continuous decisions. We do not suffer from this type of restriction -- feasibility is guaranteed through boundary conditions, while couplings between different sub-problems can be arbitrarily complex. Further, while their method can encourage the reordering to lead to a balanced block-diagonal matrix, the extent to which this is possible is not fully under control. In contrast, our decomposition scheme allows for full controllability of the complexity of the sub-problems: we can, for instance, arbitrarily decide how many trains we want to consider in each step in our proposed train-wise decomposition, see Section \ref{subsec:train-wise-decomp}.
	
In~\cite{pellegrini_1}, the effects of different representation granularities are investigated, along with some discussion on rolling horizon optimization. In this work, optimizations are split through time in `windows', in contrast to our approach of prescribing different optimization horizons for each train. The limitations of these `windows' are acknowledged, requiring trains or decisions outside these `windows' to be considered to avoid deadlocking. It is unclear whether such measures to modify the planning horizon guarantees deadlock-free plans. In this paper, we examine this problem, by way of independent horizons for each train, and illustrate there can be multiple solutions, see Section \ref{sec:recursive_feasibility}.
	
The work in~\cite{lamorgese} presents a model decomposition technique that is similar to traditional Benders' decomposition. Coherence among the sub-problems is guaranteed by an iterative master-slave scheme and message passing, while our work provides \emph{a priori} guarantees that sub-problems can be reassembled into a feasible global solution.
	
In addition, one of the core results of this paper, which allows the models to be split into train sub-fleets of arbitrary size, has not been discussed in any of these prior works on decomposition techniques.

\section{Railway System Description and Traffic Optimization Model}
\label{sec:opt_model_static}


In this work we consider railway systems comprised of stations connected by single or double tracks. There can be junctions and meet points on the lines connecting these stations. We represent this as a graph with nodes and edges. 

Edges represent the tracks required for the motion of the train from one node to the next to take place. We allow for single or double edges to model, respectively, single or double tracks. A single edge can be utilized by only one train at the time. We further assume that double tracks are utilized exclusively in bidirectional mode, i.e., we do not allow two trains transiting in the same direction to utilize the two lines of a double track at the same time.

Nodes characterize the only locations \textit{in the schedule} where train stops can be planned -- once a train starts transiting over an edge, it can complete its motion without interruption until it reaches the next node. Trains may stop in the middle of an edge as they \textit{physically} transit through the network, e.g., due to disturbances, but the schedules we generate are constructed ensuring that, nominally, trains can travel without those stops. To emphasize its nature as an uninterrupted process, we refer to the motion of a train from a node to the next as a transit \textit{stage}. In the models discussed in the experimental section~\ref{sec:results}, which include a real freight network, we have placed nodes at locations where signaling is present. While a train is stopped, it still occupies tracks: we represent this as a train occupying a \emph{slot} inside a node. Stations as well as meet points are simplified into single nodes with multiple slots, meaning that the additional maneuvers and time required for a train to transit through or stop at a station are neglected, but if a train does dwell at, e.g., a station, then that is represented as the train dwelling at the node and occupying a slot for a corresponding period of time.

We finally assume, for technical simplicity, that the path of a train from its current position to its destination has been predetermined. It is possible to add path selection to the optimization model presented in this work using the approach introduced in~\cite{dariano_path_opt}, although this is outside the scope of this paper.

The purpose of the optimization model presented in this section is to produce movement schedules over a finite horizon after sampling the current state of the railway system at a given point in time. These computations might be required if, for instance, a previous schedule is invalidated by perturbations, or if it has been executed to completion. The objective pursued is the maximization of the network's throughput, proxied in our objective by the minimization of the aggregated arrival time of all trains to their final nodes. This is a model suitable for cases in which the railway system is used for freight transportation~\cite{huntervalley_train, railfreight}. Note in particular that we do not have any requirement related to adherence to preexisting timetables. We briefly discuss how the model presented in this paper could be extended to incorporate these. But to keep the discussion centered on the characterization of deadlocks, our model focuses on the \emph{physical} constraints on railway traffic, rather than the \emph{operational} ones. We finally observe that in Section~\ref{sec:closed_loop_control} we will argue that infeasibility challenges, which represent the starting point for the main contribution of this paper, are independent of the specific model used to represent the system discussed in this section.

\subsection{Optimization Model Formulation}

Given a graph $\mathcal{G}(\mathcal{E},\mathcal{N})$ comprising a set of edges $\mathcal{E}$ and nodes $\mathcal{N}$, and a set of trains indexed by $i \in I$. For each train $i \in I$ we introduce
\begin{eqnarray}
\begin{array}{l}
	\Ni = \left( \nizr, \nio, \dots, \niT \right) \\ 
	\Ei = \left( \eizr, \eio, \dots, \eiTm \right)
\end{array}
\label{eq:nodes_and_edges_in_path}
\end{eqnarray} 
as the sequences of nodes and edges, respectively, in the path of train $i$ from its current position to its destination node $\niT$, where $\Fi$ characterizes the number of stages to the terminals. Note that we always use bracket notation $[\cdot]$ when we refer to stages. If a train is currently transiting an edge, then that edge is $\eizr$ in $\Ei$, and $\nizr$ is the last node it visited. 
Let $\tie$ be the index of edge $e$ in $\Ei$, and $\tin$ be the index of node $n$ in $\Ni$. Whenever clear from the context, we drop the index $i$ and have $\te, \tn, \nt, \et$ instead.

\textbf{Sequentiality of transit.} The initial set of constraints represents the required temporal sequentiality of transit over the edges of the network. Let $\yit \in \mathbb{R}^+$ be the optimization variable modeling the time at which train $i \in I$ \textit{departs} from the $\stag-$th node $\nit$. Then,
\begin{align}
\begin{array}{lll}
	 \yio \geq \yiz + \Teiz \cdot (1-\wiinit) & \forall i \in I\\
	 \yit \geq \yitm  + \Teitm  & \forall i \in I, \ \stag = 2, \dots, \Fi -1,
\end{array}
\label{eq:sequentiality}
\end{align}
where $\Teit \in \mathbb{R}$ is the time required by train $i$ to complete travel over the $\stag$--th edge $\eit$, which for the first stage is reduced by the fraction of the edge already traversed $\wiinit$. 
The underlining of $\wiinit$ indicates that this is a measurement of the current state of the system used to initialize the optimization model, an aspect which will be further analyzed in Section~\ref{sec:closed_loop_control} where we discuss closed--loop operation. Note that $\Teit$ can depend on several factors, including the current speed of the train, its state, i.e., whether it's empty or loaded with goods, wear and tear conditions, its length and number of locomotives, etc. As long as these characteristics are effectively captured in the edges travel times they fit into our optimization framework.

\emph{Initial conditions.} Let $I^\mathrm{edge} \subseteq I$ be the subset of trains currently transiting an edge (i.e., not stopped at a node), and $\eizr$ be that edge. Then we have
\begin{eqnarray}
\begin{array}{ll}
\yiz = 0 & \forall i \in I^\mathrm{edge}.
\end{array}
\label{eq:init_for_y0}
\end{eqnarray} 

\textbf{Edge conflicts.} We partition the set of edges into single and double tracks, $\mathcal{E} \doteq \mathcal{E}^\mathrm{s} \cup \mathcal{E}^\mathrm{d}$. The former allows the transit of at most one train at the time, while on the latter two trains can transit as long as they are headed in opposite directions. For each single track edge $e \in \mathcal{E}^\mathrm{s}$ we form the set of conflicts
\begin{eqnarray}
C_e \doteq \left\{\left. (i,j) \ \forall i,j \in I, \ j > i \ \right| e \in \Ei, \ e \in \Ej \right\},
\end{eqnarray}
encoding the fact that if both trains $i$ and $j$ are to transit over edge $e$ within their planned path to a destination, then a conflict must be resolved to determine the train transiting first. The construction for double  edges $e \in \mathcal{E}^\mathrm{d}$ is similar, but conflicts are considered only among trains transiting in the same direction. Then we have
\begin{eqnarray}
\begin{array}{lll}
\yiarg{\te} \geq \yjarg{\te} + \Tej - M \zedgearg{i,j,e} 
\\  
\yjarg{\te} \geq \yiarg{\te} + \Tei - M(1-\zedgearg{i,j,e}) 
\\
\zedgearg{i,j,e} \in \left\{0,1\right\}, 
\end{array}
\label{eq:edge_conflicts}
\end{eqnarray}
for all $(i, j) \in C_e, e \in \mathcal{E}$, where we introduced the binary optimization variable $\zedgearg{i,j,e}$ which attains $1$ if train $i$ is scheduled to transit before $j$ over edge $e$, and is $0$ otherwise. The value of $M$ has to be set sufficiently large, e.g.,~$M \geq \max_{i \in I} \yiarg{\Fi-1}$.

%

\emph{Initial conditions.} Trains that are currently transiting an edge $i \in I^\mathrm{edge}$ automatically get priority over that edge:
\begin{eqnarray}
\begin{array}{ll}
	\zedgearg{i,j,\eizr} = 1 & \forall i \in I^\mathrm{edge}, \ (i,j) \in C_{\eizr}.
\end{array}
\label{eq:init_for_xedge}
\end{eqnarray} 

\textbf{Node conflicts.} Similar to edges, the resolution of conflicts over a node involves deciding which train transits first, and is encoded with the binary variable $\znodearg{i,j,n}$, attaining $1$ if train $i$ transits over $n$ before train $j$. Nodes are characterized by a number of ``slots'' indicating how many trains can be present at that node at the same time. Before transiting, a train thus also needs to acquire a slot on the nodes along its path. To capture this, we introduce the binary variable $\zslotarg{i,n,l}$ which indicates whether train $i$ occupies slot $l \in L_n$ on its transit over node $n$, where $L_n$ is the set of slots at node $n$. Then, for each $n \in \mathcal{N}$, we introduce the following set to capture conflicts over nodes:
\begin{eqnarray*}
	C_n \doteq \left\{ (i,j) \ \forall i,j \in I, j>i \left|\right. n \in \Ni, \mathrm{ \ and \ } n \in \Nj \right\},
\end{eqnarray*}
and require that schedules satisfy the following constraints:
\begin{eqnarray}
\begin{array}{llr}
\yjarg{\tn-1} \geq& \yiarg{\tn} - M (1-\znodearg{i,j,n})\\
& - M (1-\zslotarg{i,n,l}) - M (1- \zslotarg{j,n,l}),\\ 
\yiarg{\tn-1} \geq& \yjarg{\tn} - M \znodearg{i,j,n}\\
&- M (1- \zslotarg{i,n,l}) - M (1- \zslotarg{j,n,l})\\ 
\end{array}
\label{eq:node_conflicts}
\end{eqnarray}
for all $n \in \mathcal{N}, l \in L_n$, and $(i,j) \in C_n$. These constraints can be active only if, for a given node $n$ and slot $l$, both $\zslotarg{i,n,l}$ and $\zslotarg{j,n,l}$ attain a value of 1 in the solution, i.e., both trains are scheduled to use the same slot during their transit. In such a case, the constraint ensures that if train $i$ transits before $j$ on the node, then the start time of train $j$ over the edge \textit{leading} to node $n$ has to be greater or equal to the start time of $i$ \textit{leaving} node $n$. Additionally, each train occupies exactly one slot during transit:
\begin{eqnarray}
\sum_{l \in L_n} \zslotarg{i,l} = 1 & \forall i \in I.
\label{eq:node_conflict_ii}
\end{eqnarray}

\emph{Initial conditions.} As with edges, trains that are currently transiting a node are occupying a slot  on that node and, hence, they automatically get priority over that node and acquire a slot. Let $I^\mathrm{node} \subseteq I$ be the subset of trains currently transiting on a node, $\nizr$ be that node and $\liinit$ be the slot they are currently occupying. Then,
\begin{eqnarray}
\begin{array}{ll}
\znodearg{i,j,\nizr} = 1 & \forall i \in I^\mathrm{node}, \ (i,j) \in C_{\nizr}\\
\zslotarg{i,\nizr,\liinit} = 1 & \forall i \in I^\mathrm{node},
\end{array}
\label{eq:init_for_nodes}
\end{eqnarray}
where, as before, the underlining of $\liinit$ indicates that this is part of the state that is measured.

\textbf{Objective function.} As a proxy for rail network throughput maximization, the objective we pursue is the minimization of the sum of the trains' arrival times,
\begin{eqnarray}
\begin{array}{ll}
	\mathrm{minimize} & \sum\limits_{i \in I} y_{i, \Fi-1}.
\end{array}
\label{eq:obj_function}
\end{eqnarray}
We note that there is significant flexibility in the type of objectives that could be used including, for example, penalties on delays at intermediate steps. To achieve this, for some train $i \in I$ scheduled to depart from stage $k$ at the reference time $y_i^\mathrm{ref}[k]$ stemming from, e.g., a timetable, a new optimization variable $y^{\mathrm{dev}}_i[k] \geq 0$ can be introduced as
\begin{eqnarray*}
	- y^{\mathrm{dev}}_i[k] \leq y_i[k] - y_i^\mathrm{ref}[k] \leq y^{\mathrm{dev}}_i[k].
\end{eqnarray*}
This variable could then be added to the objective function allowing a straightforward extension of the model presented to pursue timetable adherence.

In summary, the complete model considered in this paper is
\begin{eqnarray}
\Pp: \left\{
\begin{array}{ll}
	\min\limits_{y, z} & \text{total completion time } \eqref{eq:obj_function}\\
	\mathrm{s.t.} & \text{sequentiality } \eqref{eq:sequentiality}\\
	& \text{edge conflicts } \eqref{eq:edge_conflicts}\\
	& \text{node conflicts } \eqref{eq:node_conflicts},\eqref{eq:node_conflict_ii}\\
	& \text{initial conditions } \eqref{eq:init_for_y0}, \eqref{eq:init_for_xedge}, \eqref{eq:init_for_nodes}\\
	& \zedge, \znode, \zslot \in \left\{0, 1\right\}\\
	& y \geq 0.\\
\end{array} 
\right.
\end{eqnarray}

\section{Receding Horizon Control and Recursive Feasibility}
\label{sec:closed_loop_control}




In this section we embed the optimization model of Section~\ref{sec:opt_model_static} within a strategy called receding horizon control, in which the optimization horizon for train $i \in I$ is shortened to \hbox{$0 \leq \ffi \leq \Fi$} in \eqref{eq:nodes_and_edges_in_path}. 

Within this framework, feedback is introduced by continuously measuring the current state of the system through time, denoted by $\xt$, and using this new information to compute a new solution to $\Pp$. The state of the system $\xt \doteq (\niinit, \wiinit, \liinit)_{i \in I}$ denotes the complete set of measurements required to initialize the optimization model $\Pp$, where $\niinit \equiv \nizr \in \mathcal{N}$ is the most recent node visited by train $i \in I$, \hbox{$0 \leq \wiinit \leq 1$} is the fraction of the edge $\eizr$ already traversed and $\liinit$ indicates the slot occupied if the train is  currently located at a node. We indicate with $\Pfittzr$ the instance of $\Pp$ generated at time $t$ for the initial state $\xt$ and under the optimization horizon $\ff_\tim = \left(\ff_{i,\tim}\right)_{i \in I}$. In this section we thus examine the evolution through time of the state of the railway system $\xt$ under the control of movement schedules produced by $\Pfittzr$. 

For simplicity of notation and exposition, we assume here that schedules are recomputed at constant intervals of time $\Delta t$, even though this is not necessary and the results presented in this paper can be applied verbatim in contexts where arbitrary events, such as trains arriving late at a node, are used to trigger plan recomputations. Note that $t$ represents global (continuous) time, while $k$ in the previous section was an index of time expressed as an integer number of stages relative to the position of the system at the time it was instantiated.

The potential problem with shortening prediction horizons is that it makes the controller ``blind'' to trains interactions in the later stages, which might lead to deadlocking.

\begin{example}
	\label{example:deadlocking}
	Consider the state of the system depicted in Figure~\ref{fig:example_base}: $T_1$ and $T_2$ originating from separate branches of the network are about to merge on the same single line with two passing sidetracks, while $T_3$ is transiting in opposite direction. The destination nodes for the trains are indicated with grayed out, dotted arrows: the destination for $T_1$ and $T_2$ is $n_5$ (which could represent a station), while the destination for $T_3$ is $n_0$, resulting in
		\begin{eqnarray*}
			\begin{array}{ll}
				n_{T_1} = (n_1, n_3, n_4, n_5), & e_{T_1} = (e_{1-3}, e_{3-4}, e_{4-5}),\\
				n_{T_2} = (n_2, n_3, n_4, n_5), & e_{T_2} = (e_{2-3}, e_{3-4}, e_{4-5}),\\
				n_{T_3} = (n_7, n_6, \dots, n_0), & e_{T_3} = (e_{7-6}, \dots, e_{0-1}).
			\end{array}
		\end{eqnarray*}
	Trains are stopped with their heads at the end of the blocks on which they are dwelling (indicated with red squares, for ``stopped''). 
	
	Assume that at that point in time $\tim$, the optimization horizons for the individual trains used to construct model $\Pfittzr$ are as depicted in Figure~\ref{fig:example_deadlocking}, i.e.,
	\begin{eqnarray*}
	f_t = \left[\begin{array}{c}
	f_{T_1}\\
	f_{T_2}\\
	f_{T_3}
	\end{array}\right] = \left[\begin{array}{c}
	3\\
	2\\
	2
	\end{array}\right].
	\end{eqnarray*}
	A feasible solution to $\Pp$ for this situation is shown in the train graph of Figure~\ref{fig:train_graph_deadlocking}. As trains transit according to this feasible schedule, the system becomes physically deadlocked at $\tim + \Delta \tim$. This is also reflected by the optimization model $\PFittzzr$ becoming infeasible. It should be emphasized that the challenge illustrated in this example is a result of the physical state of the system represented in Figure~\ref{fig:example_base} and the extent of the horizons within which trains `see' each other. The specific optimization model described in Section~\ref{sec:opt_model_static} becoming infeasible is only a reflection of the system becoming deadlocked, implying that other optimization models would be subject to the same challenge under these conditions. Note also that this occurs despite the absence of imperfect information or system disturbances, i.e., we have perfect knowledge of travel times and they don't deviate from their nominal values. 
\end{example}

\begin{figure}
	\centering
	\includegraphics[width=1\linewidth]{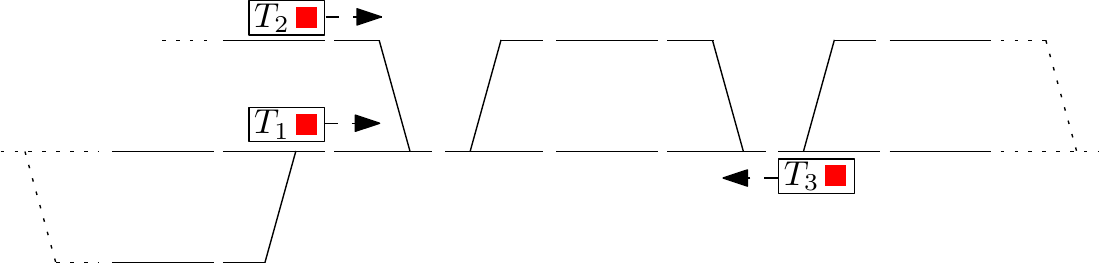}
	\caption{System state discussed in Example~\ref{example:deadlocking}.}
	\label{fig:example_base}
\end{figure}
\begin{figure}
	\centering
	\includegraphics[width=1\linewidth]{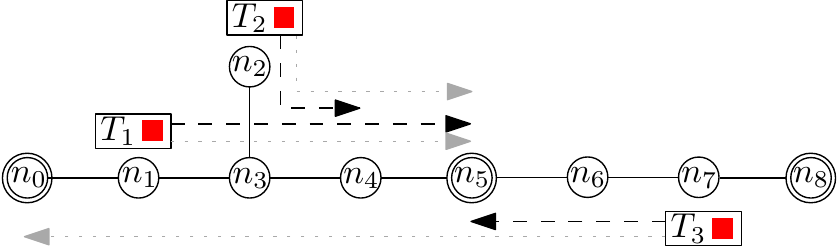}
	\caption{Graph model for the system discussed in Example~\ref{example:deadlocking}.}
	\label{fig:example_deadlocking}
\end{figure}
\begin{figure}
	\centering
	\includegraphics[width=0.6\linewidth]{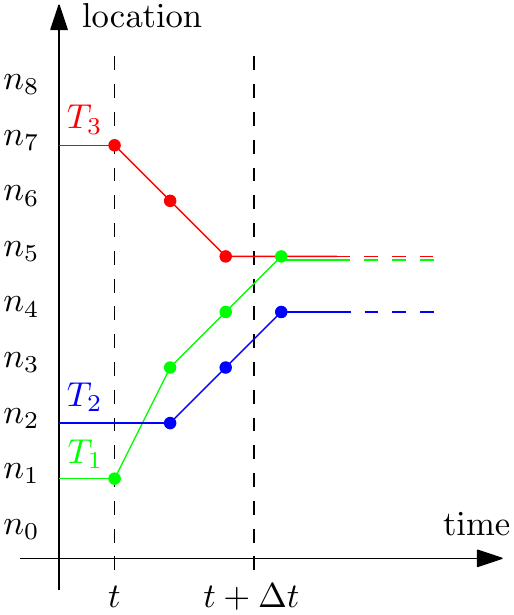}
	\caption{Train graph discussed in Example~\ref{example:deadlocking}.}
	\label{fig:train_graph_deadlocking}
\end{figure}

Instances affected by a deadlock are reflected as models that do not allow for a finite, feasible set of start times $y$, i.e., $\PFitzr$ is infeasible. Given that $\PFitzr$ exclusively entails physical constraints on traffic, rather than operational ones such as deadlines, an infeasible model indicates that there is no sequence of decisions steering trains from their current position to their respective terminals that is compatible with the physical limitations on traffic, i.e., that there is a deadlock. Hence, a state $\xt$ is deadlocked if and only if $\PFitzr$ is infeasible.

In the following section we take a closer look at the relationship between $\PFitzr$ and $\Pfittzr$ in the context of deadlocking.




\subsection{Recursive Feasibility}
\label{sec:recursive_feasibility}


Recursive feasibility is the fundamental notion used to establish the stability of linear, time-invariant systems under receding horizon controllers such as model predictive controllers~\cite{camacho_mpc}. Even though our system is neither, due to the presence of binary variables and the fact that the constraints are time--varying, the issue of recursive feasibility remains crucial in ensuring that the system is not driven into a deadlocked state when the prediction horizons are shortened to $0 \leq \ff \leq \FF$. To simplify the exposition and the proofs, we assume infinite capacity at the terminals. This allows us to present core ideas without incurring in the technical difficulties encountered when dealing with infinite horizons~\cite{morari_mpc_book}.

The core notion introduced in this paper to ensure recursive feasibility is that of a safe state.

\begin{definition}[Safe state]
	\label{def:safe_state}
	A \textit{safe state} is a system state \hbox{$\safex  \doteq (\safen{i}, 0, l_i^\mathrm{safe})_{i \in I}$} in which all trains are at a node, and all nodes $n \in \mathcal{N}$ in the graph have an unoccupied slot.
\end{definition}

\begin{definition}[Non--regressiveness]
	\label{def:non_regressiveness}
	We define as \emph{non--regressive with respect to $x$} a system state in which trains occupy nodes that are successors along their paths from a given state $x$.
\end{definition}

We overload the inequality sign ``$\leq$'' when applied to horizons to indicate non-regressiveness: $\ffi \leq \tilde{\ffi}$ means that, for train $i$, the horizon determined by $\tilde{\ffi}$ terminates at a node that is further along $i$'s path than the node reached by $\ffi$. When $\ffi$ and $\tilde{\ffi}$ refer to two different points in time, the numbers might not satisfy the standard meaning of the inequality, but they still do imply non-regressiveness.

The following result is a characteristic of safe states which will be used to prove recursive feasibility.

\begin{proposition}
	There always exists a sequence of train movements that drives the system from any safe state $\safex_a$ into any other safe state $\safex_b$ that is non-regressive with respect to $\safex_a$.
\end{proposition}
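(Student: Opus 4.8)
The plan is to give a constructive proof by induction. First I would restrict attention to movement sequences in which only one train moves at a time while all others remain parked at nodes. Under this restriction every edge carries at most one train at any instant, so the edge conflict constraints (single-- and double--track alike) are automatically satisfied, and the only physical constraint that can still be violated is the slot capacity at the nodes. It therefore suffices to exhibit a sequence of single, one--node--forward moves that respects node capacities at every instant and ends at $\safex_b$. To measure progress I would attach to each train $i$ a deficit $d_i \geq 0$ equal to the number of stages separating its current node from its node in $\safex_b$; non--regressiveness of $\safex_b$ with respect to $\safex_a$ (Definition~\ref{def:non_regressiveness}) guarantees $d_i \geq 0$ initially, and I would maintain as an invariant that every train always sits between its $\safex_a$-- and $\safex_b$--positions, so that $d_i \geq 0$ throughout. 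The induction is then on the total deficit $D \doteq \sum_{i \in I} d_i$.

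The base case $D = 0$ is immediate, since all trains then already occupy their $\safex_b$--nodes. For the inductive step I would start from a safe state with $D > 0$ and produce a finite sequence of forward moves reaching another safe state with strictly smaller $D$. As $D > 0$, some train $i$ has $d_i > 0$; its next node $n'$ has a free slot because the current state is safe, so train $i$ can advance into $n'$, lowering $D$ by one. If $n'$ still has a free slot afterwards the resulting state is again safe and the step is complete; the only problematic case is when this move fills $n'$. In that case I would \emph{push the congestion forward} by a cascade: from the newly full node, select a train that still has positive deficit, advance it one node, and repeat from whatever node (if any) this last move fills.

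The crux of the argument --- and the step I expect to be the main obstacle --- is showing that this cascade can always proceed and must terminate in a safe state, i.e.\ that a congested node never deadlocks the shuffling. The key lemma is that \emph{every full node contains a train with strictly positive deficit}. This is exactly where the safety of the target state $\safex_b$ is essential: if a node $n'$ is currently full it holds $|L_{n'}|$ trains, whereas in $\safex_b$ at most $|L_{n'}| - 1$ trains terminate at $n'$, since one slot must stay free by Definition~\ref{def:safe_state}. Because every train currently at $n'$ has its $\safex_b$--position at or beyond $n'$ (by the invariant), the number of trains at $n'$ with zero deficit cannot exceed $|L_{n'}| - 1$, so at least one of the $|L_{n'}|$ trains present must still be destined to move further, i.e.\ has $d_i > 0$. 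Advancing such a train is always a legal, deficit--decreasing move, so each cascade step lowers $D$ by one. Since $D$ is a non--negative integer it cannot decrease indefinitely, so the cascade terminates after finitely many moves; termination can only occur through a move that enters a node without filling it, which leaves every node with a free slot, hence a safe state, whose total deficit is strictly below the one we started the step with.

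Finally I would assemble these pieces: concatenating the move sequences produced by the inductive step drives the system through a chain of safe states of strictly decreasing total deficit until $D = 0$, at which point the system rests at $\safex_b$. Throughout, each individual move is physically admissible --- a single train traversing an otherwise empty set of edges and entering a node that has a free slot at that instant --- so the full concatenation is a valid sequence of train movements from $\safex_a$ to $\safex_b$, which is precisely what the proposition asserts.
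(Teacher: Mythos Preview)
Your argument is correct and rests on the same key observation as the paper --- namely, that whenever a node becomes full it must contain a train that still has to advance, because the safety of $\safex_b$ caps the number of trains terminating there at $|L_{n'}|-1$. The route you take to exploit this is, however, genuinely different from the paper's. The paper's proof invokes Algorithm~\ref{alg:trivial_safe_to_safe}, which sends each train \emph{all the way} from its $\safex_a$--node to its $\safex_b$--node in a single transit, chaining only when an arrival fills a destination node; correctness is argued informally by noting that the unique possibly--full node always hosts an unmoved train and that every other node retains a free slot for transit. Your version instead advances trains one stage at a time and wraps the whole construction in an explicit induction on the total deficit $D$, with a cleanly stated invariant (each train stays between its $\safex_a$-- and $\safex_b$--positions) that makes the key lemma immediate. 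What your approach buys is a fully explicit termination argument and a proof that does not rely on the reader tracking the ``at most one full node'' invariant implicitly; what the paper's approach buys is a shorter description and a policy (Algorithm~\ref{alg:trivial_safe_to_safe}) that is reused verbatim in the recursive--feasibility and decomposition arguments of Sections~\ref{sec:closed_loop_control} and~\ref{sec:computational_ramifications_rf}. One small point you leave implicit: during a cascade, the legality of each move (destination has a free slot) follows because at most one node is full at any instant --- you use this but do not state it; making it explicit would close the argument completely.
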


\begin{proof}
	Algorithm~\ref{alg:trivial_safe_to_safe} constructs one such sequence of movements. Since the initial state is safe, any train can be moved forward to any other node in the network in a first step; the destination node has to have at least two slots (otherwise it cannot be part of a safe state). Upon train arrival, the node has now either no empty slots left or at least one. If it still has at least one empty slot, then the current state is also safe, and the procedure can restart by picking any other train that hasn't been moved yet. If the current node has no slots left, there must be another train on the current node that has not been moved yet. By construction, all other nodes have at least one empty slot available for transit, meaning that the train can be moved anywhere in the network. This procedure can be repeated to termination.
\end{proof}


\begin{algorithm}
	\caption{Trivial Policy From Safe State to Safe State}
	\label{alg:trivial_safe_to_safe}
	\begin{algorithmic}[1]  
		\State $I^{\mathrm{open}} \gets I$
		\State $i \gets$ random choice from $I^{\mathrm{open}}$
		\While{$I^{\mathrm{open}}$ not empty}
		\State let $i$ transit from $\safex_{i,a}$ to $\safex_{i,b}$
		\State remove $i$ from $I^{\mathrm{open}}$
		\If{$\safex_{i,b} = \safex_{j,a}$ for some $j \in I^{\mathrm{open}}$}
		$i \gets j$
		\Else{} $i \gets$ random choice from $I^{\mathrm{open}}$
		\EndIf
		\EndWhile
	\end{algorithmic}
\end{algorithm}


Algorithm~\ref{alg:horizon_computation} presents a procedure to compute a dynamic horizon $\ff_{\tim}$ based on the notion of safe states. If the system is in a non-deadlocked state $\xt$, it is guaranteed to successfully compute an optimization horizon $\ff_{\tim}$ which ensures recursive feasibility.
In the proposed procedure, the optimization horizon for each train $\ffi$ is iteratively extended until it reaches a node such that the state of the system would be safe if trains transited up to that point from their current position. Prediction horizons are further extended until the computed $\ff_{\tim}$ results in a feasible $\Pfittzr$, while retaining the condition on the final state being safe, a condition that is guaranteed to be met if $\PFitzr$ is feasible. We call horizons $\ff_{\tim}$ computed according to Algorithm~\ref{alg:horizon_computation} \textit{safe optimization horizons}.

\begin{remark}
	\label{remark:ftilda_geq_f_always_feasible}
	Note that the feasibility of $\Pfittzr$ implies the feasibility of $\Pfitzrtilde$ for any $\tilde{\ff}_\tim \geq \ff_\tim$ that leads to a safe state. This is true because Algorithm~\ref{alg:trivial_safe_to_safe} can always be used to generate a feasible schedule between the corresponding safe states. 
\end{remark}

Thus, choosing larger initialization horizons (line $1$ of the algorithm) reduces the number of models that have to be attempted before a feasible one is found. 

\begin{algorithm}
	\caption{Compute Safe  Horizon Termination $\ff$}
	\label{alg:horizon_computation}
	\begin{algorithmic}[1]  
		\State $\ffi \gets 1$, for all $i \in I$
		\State $\eta_n \gets |L_n|$, for all $n \in \mathcal{N}$
		\ForAll{$i \in I$}
		\While{$\eta_{\ni{\ffi}} \leq 1$}{}
		\State $\ffi \gets \ffi+1$
		\EndWhile
		\State $\eta_{\ni{\ffi}} \gets \eta_{\ni{\ffi}} - 1$
		\EndFor
		\If{$\Pfitzr$ is feasible} return $\ff$
		\Else
		\State select some $i \in I$ for which $\ffi < \Fi$
		\State $\ffi \gets \ffi+1$
		\State \textbf{goto} 2 
		\EndIf
	\end{algorithmic}
\end{algorithm}


We next prove the correctness of Algorithm~\ref{alg:horizon_computation}, and additionally provide a characterization of deadlocks that is generally computationally cheaper than solving the full-horizon model $\PFitzr$. 

\begin{theorem}[Deadlock characterization and recursive feasibility]
	\label{thm:recursive_feasiblity} 
	Let $\Pfittzr$ be the optimization program instance generated at time $t$ for the initial state $\xt$ and with any safe,  non-regressive optimization horizon $\ff_\tim$ produced by Algorithm~\ref{alg:horizon_computation}. Then, 
	\begin{enumerate}[a)]
		\item the state $\xt$ is not deadlocked if and only if $\Pfittzr$ is feasible, and
		\item if $\Pfittzr$ is feasible, then its operation is recursive feasible.
	\end{enumerate}
\end{theorem}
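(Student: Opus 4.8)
The plan is to lean on the characterization that $\xt$ is deadlocked if and only if the full-horizon model $\PFitzr$ is infeasible (established earlier in Section~\ref{sec:closed_loop_control}), so that part a) reduces to showing that feasibility of the truncated, safe model $\Pfittzr$ is equivalent to feasibility of $\PFitzr$. For the direction ``$\Pfittzr$ feasible $\Rightarrow$ not deadlocked'', I would take any feasible schedule of $\Pfittzr$, which by construction of Algorithm~\ref{alg:horizon_computation} drives every train to the terminal node of its safe horizon and hence leaves the system in a safe state $\safex_b$. Since trains move only forward along their fixed paths and the destinations have infinite capacity, the full-horizon terminal state $\safex_{\mathrm{term}}$ (all trains at their destinations) is itself safe and non-regressive with respect to $\safex_b$. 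The Proposition then supplies a collision-free movement sequence from $\safex_b$ to $\safex_{\mathrm{term}}$, and concatenating it with the $\Pfittzr$-schedule produces a schedule realizing $\PFitzr$; thus $\PFitzr$ is feasible and $\xt$ is not deadlocked.

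For the converse direction of a), ``not deadlocked $\Rightarrow$ $\Pfittzr$ feasible'', I would argue via termination of Algorithm~\ref{alg:horizon_computation}, which only ever returns horizons for which the constructed model is feasible; it therefore suffices to show the algorithm terminates when $\xt$ is not deadlocked. Because each destination node has infinite capacity, the slot-search loop (lines~4--5) never pushes any $\ffi$ past $\Fi$, so the horizons stay bounded, and in the worst case the extension step increments them until $\ff_\tim = \FF$, where $\Pfittzr$ coincides with $\PFitzr$ --- feasible precisely because $\xt$ is not deadlocked --- so the algorithm returns. Equivalently, one may read Remark~\ref{remark:ftilda_geq_f_always_feasible} in contrapositive form: were $\PFitzr$ infeasible, every shorter safe horizon would be infeasible too, so a returned safe horizon certifies non-deadlock. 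This closes part a).

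For part b), I would show that feasibility of $\Pfittzr$ forces the successor state $\xzrtt$ to be non-deadlocked, and then invoke part a) at time $t + \Delta t$. Executing any feasible schedule of $\Pfittzr$ for a duration $\Delta t$ moves the system along a trajectory still terminating in $\safex_b$; the remaining tail of that schedule is a feasible continuation from the intermediate state $\xzrtt$ to $\safex_b$, because $\Pp$ contains only causal, lower-bounded physical constraints and no deadlines, so truncating and restarting preserves feasibility. Hence from $\xzrtt$ the system can reach a safe state and, by the concatenation argument above, its terminals, so $\PFittzzr$ is feasible and $\xzrtt$ is not deadlocked (if the trajectory already reaches $\safex_b$ within $\Delta t$, then $\xzrtt$ is safe and trivially non-deadlocked). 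Applying part a) to the freshly computed safe horizon $\ff_{\tim + \Delta \tim}$ yields feasibility of $\Pfitttzzr$, which is exactly recursive feasibility.

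The main obstacle I anticipate is the step that silently converts the purely combinatorial ``sequence of train movements'' produced by the Proposition into a genuine feasible point of the mixed-integer model $\Pp$ --- an assignment of continuous departure times $y$ and binary precedence variables $\zedge,\,\znode,\,\zslot$ satisfying \eqref{eq:sequentiality}--\eqref{eq:node_conflict_ii}. The crux is that $\Pp$ imposes only lower bounds on the $y$ variables (there are no arrival deadlines), so any fixed, collision-free ordering of resource usage can always be realized by orienting the big-$M$ precedence constraints accordingly and propagating waiting times forward. I would make this translation explicit once, since it simultaneously justifies the concatenation used in a) and the tail-feasibility claim used in b).
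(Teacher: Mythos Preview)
Your proposal is essentially the paper's own argument, with the order of the two parts reversed: the paper proves b) first by taking a feasible solution of $\Pfittzr$, extending it to the terminals via Algorithm~\ref{alg:trivial_safe_to_safe}, time-shifting by $\Delta t$ to obtain a feasible point of $\PFittzzr$, and then extending from the safe state at $\ff_t$ to the safe state at $\ff_{t+\Delta t}$ again via Algorithm~\ref{alg:trivial_safe_to_safe}; part a) is then read off from this construction exactly as you do. The one organizational difference worth noting is that the paper's direct construction for b) makes the role of the non-regressiveness hypothesis explicit---it is precisely what licenses the final safe-state-to-safe-state extension from $\ff_t$ to $\ff_{t+\Delta t}$---whereas your route of invoking part a) at $t+\Delta t$ leaves this hypothesis somewhat idle, since Algorithm~\ref{alg:horizon_computation} on its own does not enforce non-regressiveness; you should make clear that recursive feasibility is being claimed for any non-regressive safe $\ff_{t+\Delta t}$, not merely for whatever Algorithm~\ref{alg:horizon_computation} happens to return. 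Your closing observation about translating the combinatorial movement sequence of the Proposition into an actual feasible $(y,z)$ point of $\Pp$ is well taken and is indeed glossed over in the paper.
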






\begin{proof}
	We first demonstrate part b) of the Theorem. 

	Let $(\tilde{y}, \tilde{z})$ be any feasible solution of $\Pfittzr$, where $\ff_{\tim}$ is a safe  horizon computed at time $\tim$ according to Algorithm~\ref{alg:horizon_computation}. Let $(\overline{y}, \overline{z})$ be its augmentation to the terminal nodes by application of the trivial policy in Algorithm~\ref{alg:trivial_safe_to_safe}.
	Note that a state in which all trains are at terminals is always safe under our assumption of infinite capacity at these nodes. By construction of $\ff$ and Algorithm~\ref{alg:trivial_safe_to_safe}, $(\overline{y}, \overline{z})$ is feasible for $\PFitzr$ and, hence, $(\overline{y} - \Delta t, \overline{z})$ is feasible for $\PFittzzr$. This shows that $(\overline{y} - \Delta t, \overline{z})$ produces a path from $\xzrtt$ to the terminals going through the safe state configuration given by the horizon termination schedule $\ff_t$ computed at $t$. We can extend this part of the solution by applying Algorithm~\ref{alg:trivial_safe_to_safe} to construct a sequence of decisions to any non-regressive terminal conditions $\ff_{\tim + \Delta \tim}$ computed at $\tim+\Delta \tim$, ensuring the feasibility of $\Pfitttzzr$. This concludes the proof of part b).

	For part a), if $\Pfittzr$ is feasible, the system is not deadlocked since, as shown in part b), we can always construct a feasible solution to $\PFitzr$ by applying Algorithm~\ref{alg:trivial_safe_to_safe}. On the other hand, Algorithm~\ref{alg:horizon_computation} extends $\ff$ until $\Pfittzr$ is feasible, which is guaranteed to succeed if the system is not deadlocked.
\end{proof}

Note that with the application of Algorithm~\ref{alg:horizon_computation} we merely determine safe optimization horizons to use within the optimization model. The trivial policy described in Algorithm~\ref{alg:trivial_safe_to_safe} is not relevant at this stage: at some given point in time, the system will generally not be in a safe state, hence the policy is not applicable. Rather, an appropriate optimization model $\Pfittzr$ is solved to determine a movement schedule from the current generic state to a safe state. The trivial policy is then applicable to drive trains from that safe state into any other subsequent safe state. Having access to a policy that is guaranteed to keep trains moving after the end of the finite optimization horizon is what allows us to ensure that the system is not driven into a deadlock. Note that we will typically recompute a new sequence of controls, i.e., a new solution to $\Pp$ for the new state and new horizons, before any of the trains have arrived at the final node within their respective horizons, in which case the system will generally not traverse that safe state. Also, note that the optimization horizons required by Theorem~\ref{thm:recursive_feasiblity} are not unique: in Example~\ref{example:deadlocking}, both $\left\{T_1: n_5, T_2: n_8, T_3: n_0\right\}$ as well as $\left\{T_1: n_8, T_2: n_5, T_3: n_0\right\}$ would be valid. Further, the result does not depend on the optimality of the solution recovered, meaning that a solver can be safely interrupted as soon as a feasible solution to $\Pfittzr$ has been found.

The following counterexample illustrates how the result in Theorem~\ref{thm:recursive_feasiblity} might fail when the assumption on non-regressiveness is violated.



\begin{example}[Non-regressiveness]
	\label{example:counterexample_non_regressive_states}
	Consider again the example depicted in Figure~\ref{fig:example_base}. Application of Algorithm~\ref{alg:horizon_computation} in this situation can result in the horizons
	\begin{eqnarray}
	\label{eq:possible_horizons_example}
	f_t = \left[\begin{array}{c}
	f_{T_1}\\
	f_{T_2}\\
	f_{T_3}
	\end{array}\right] = \left[\begin{array}{c}
	6\\
	3\\
	6
	\end{array}\right]
	\end{eqnarray}
	terminating at the nodes indicated with black dashed arrows in Figure~\ref{fig:example_non_regressivity}. Indeed, Figure~\ref{fig:example_train_graph_non_regressivity} presents a feasible movement schedule computed by solving $\Pfittzr$ from this state $\xt$ according to the horizons $\ff_\tim$ in \eqref{eq:possible_horizons_example}.
	
	Suppose that trains depart from their current location at time $\tim$ according to this schedule, and at $\tim + \Delta \tim$ the alternative horizons 
	\begin{eqnarray}
	\ff_{\tim + \Delta \tim} = \left[\begin{array}{c}
	f_{T_1}\\
	f_{T_2}\\
	f_{T_3}
	\end{array}\right] = \left[\begin{array}{c}
	2\\
	6\\
	5
	\end{array}\right]
	\end{eqnarray}
	are selected, as indicated with grayed dotted arrows in Figure~\ref{fig:example_non_regressivity}. The horizons in $\ff_{\tim + \Delta \tim}$ are safe but do not satisfy non-regressiveness: the movement was initiated with an horizon terminating at $n_8$ for $T_1$, while in this subsequent iteration it is regressed to $n_5$. Under these conditions we lose recursive feasibility: train $T_2$ has transit precedence over $T_1$ on shared segments, e.g. $\zedgearg{T_1, T_2, e_{34}} = 0$, but it is stopping at $n_5$, deadlocking $T_1$ and $T_3$ and, hence, ultimately resulting in an infeasible $\Pfitttzzr$.
\end{example}

\begin{figure}
	\centering
	\includegraphics[width=1\linewidth]{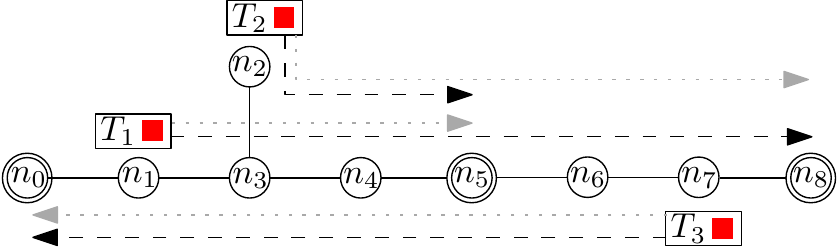}
	\caption{Graph model for the system discussed in Example~\ref{example:counterexample_non_regressive_states}.}
	\label{fig:example_non_regressivity}
\end{figure}
\begin{figure}
	\centering
	\includegraphics[width=0.85\linewidth]{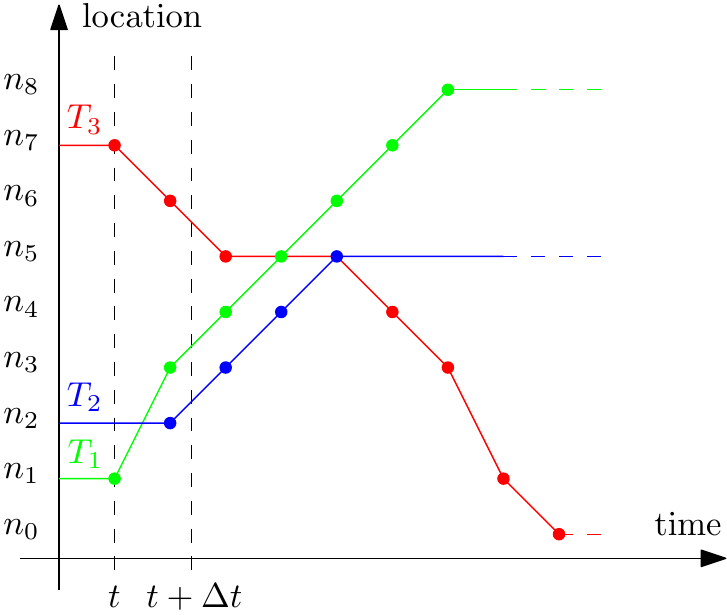}
	\caption{Train graph depicting a possible movement schedule for the system discussed in Example~\ref{example:counterexample_non_regressive_states}.}
	\label{fig:example_train_graph_non_regressivity}
\end{figure}

We finally note that the notion of safe states introduced in Definition~\ref{def:safe_state} does not exclude the existence of more efficient definitions. The one introduced in this paper is sufficient to guarantee recursive feasibility and works well for the freight network discussed in the experimental Section~\ref{sec:results}. Further, networks that do not satisfy the assumptions made in Section~\ref{sec:opt_model_static}, e.g., having more than two parallel lines, may necessitate alternative definitions. The only fundamental requirement is that a safe state must be endowed with a (usually trivial) policy that drives all trains from that safe state into a subsequent safe state, and that this policy can be applied recursively, ensuring that the system can be continuously operated for an ``infinite'' amount of time without deadlocking. In our case this is accomplished by the (inefficient, but valid) policy described in Algorithm~\ref{alg:trivial_safe_to_safe}.


In the following section we discuss important computational ramifications of recursive feasibility.

\section{Computationally Efficient Optimization Approaches}
\label{sec:computational_ramifications_rf}

As mentioned in the introduction, the computation of solutions to $\Pp$ becomes a practical difficulty, in particular when the size of the network and number of trains is large. In this section we take advantage of the tight connection between deadlocking and infeasibility of the underlying optimization models discussed in the previous section to present several approaches to address computational complexity.

\subsection{Warm Starting}
\label{sec:warm_starting}

In warm starting solutions computed at $\tim$ are reused at $\tim + \Delta \tim$ as a system has moved from $\xt$ to $\xzrtt$. We first illustrate how warm starting might fail when the conditions required by Theorem~\ref{thm:recursive_feasiblity} are violated.
\begin{example}[Warm-starting]
	\label{example:counterexample_warmstarting}
	Consider the train graph depicted in Figure~\ref{fig:example_warm_starting} related to the network shown in Figure~\ref{fig:example_base} but with different initial trains positions. Final train destinations are $\left\{T_1: n_0, T_2: n_2, T_3: n_5\right\}$, but in the current optimization model horizons have been truncated as shown on the train graph. They do not satisfy safety as defined in this paper. According to this schedule, $T_1$ transits over $e_{45}$ before $T_2$. At $t+\Delta t$ an optimization model is built with the horizon for $T_2$ extending to $n_2$ and $T_3$ to $n_5$. The only feasible sequence at this stage is for $T_2$ to transit over $e_{45}$ before $T_1$ which is not compatible with the previous solution. Note that the optimization model, in contrast to previous examples, is still feasible at $\tim + \Delta \tim$ as $T_1$ and $T_2$ are still on time to invert transit order in the passing point at $n_5$, but the schedule computed at $t$ is not a valid starting point to seed this optimization.
\end{example}

\begin{figure}
	\centering
	\includegraphics[width=0.60\linewidth]{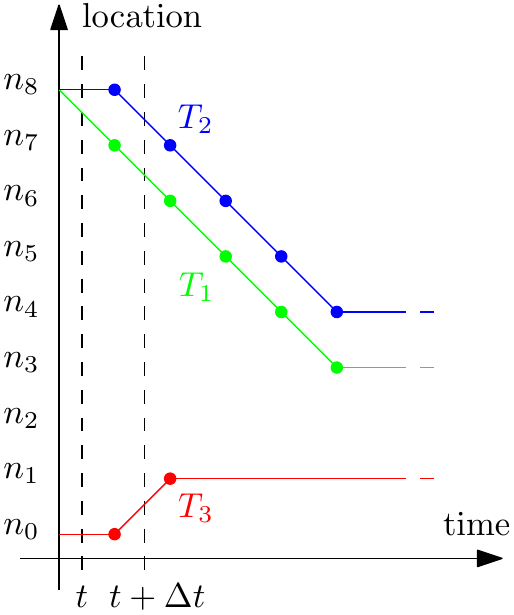}
	\caption{Train graph discussed in Example~\ref{example:counterexample_warmstarting} on warm starting procedures.}
	\label{fig:example_warm_starting}
\end{figure}

The procedures laid out in Theorem~\ref{thm:recursive_feasiblity} for the construction of safe optimization horizons guarantee that warm starting can always be performed. That is, any solution to $\Pfittzr$, when $\Pfittzr$ is feasible and $\ff_\tim$ is computed according to Algorithm~\ref{alg:horizon_computation}, can always be reused at $\tim + \Delta \tim$ as a partial solution to $\Pfitttzzr$. This is shown in the proof of the theorem, when a complete solution to $\Pfitttzzr$ is derived combining the procedure in Algorithm~\ref{alg:trivial_safe_to_safe} with a solution to $\Pfittzr$.

It should be noted that these partial solutions can either be enforced in the following optimization model $\Pfitttzzr$, in which case the size of the model to be solved is reduced, or used only as initialization points for solvers. Both can result in faster computations.

%


A by--product of this result is that Algorithm~\ref{alg:horizon_computation} can be substituted by the more efficient procedure in Algorithm~\ref{alg:horizon_computation_warm} to compute $\ff_\tim$ when the preceding $\ff_{\tim - \Delta \tim}$ is available. In particular, this more effective procedure does not require one to verify the feasibility of $\Pfittzr$ for a candidate $\ff_{\tim}$, as done on line 7 of Algorithm~\ref{alg:horizon_computation}, since the generated $\ff_{\tim}$ is guaranteed to result in a feasible $\Pfittzr$. This is true because, as discussed in Remark~\ref{remark:ftilda_geq_f_always_feasible}, having established that $\Pfitttzzrminus$ is feasible automatically ensures the feasibility of $\Pfittzr$ for $\ff_\tim \geq \ff_{\tim - \Delta \tim}$\footnote{Strictly speaking, Remark~\ref{remark:ftilda_geq_f_always_feasible} ensures the feasibility of $P(t- \Delta t, x_{t- \Delta t}, f_t )$ which, in turn, ensures the required feasibility.}.

%

\begin{algorithm}
	\caption{Warm--Started Computation of $\ff$}
	\label{alg:horizon_computation_warm}
	\begin{algorithmic}[1]  
		\Require \textbf{Required}: $\ff_{\tim - \Delta \tim}$ and $\Niarg{i,\tim - \Delta \tim}$, the node sequence \eqref{eq:nodes_and_edges_in_path} of train $i \in I$ at $\tim - \Delta \tim$.
		\State $\eta_n \gets |L_n|, \ \forall n \in \mathcal{N}$
		\ForAll{$i \in I$}
		\State $\ffi \gets \tia{\Niarg{i,\tim - \Delta \tim}[\ff_{t-\Delta t}]}$ \Comment{$\tia{n}$ is the index of node $n$ in $\Niarg{i,t}$}
		\While{$\eta_{\ni{\ffi}} \leq 1$}{}
		\State $\ffi \gets \ffi+1$
		\EndWhile
		\State $\eta_{\ni{\ffi}} \gets \eta_{\ni{\ffi}} - 1$
		\EndFor
	\end{algorithmic}
\end{algorithm}


\subsection{Anytime approaches}

A direct consequence of the results in Section~\ref{sec:closed_loop_control} and~\ref{sec:warm_starting} is that feasible solutions for arbitrary horizons lengths at subsequent iterations can be obtained without performing optimizations. Namely, once an initial feasible solution to a safe state is found, \emph{cf.}~line 7 of Algorithm~\ref{alg:horizon_computation}, that solution remains valid at $\tim + \Delta \tim$ according to the discussion in Section~\ref{sec:warm_starting}. It can then easily be extended into a solution to any arbitrarily long optimization horizon which satisfies the condition of being safe by application of Algorithm~\ref{alg:trivial_safe_to_safe}. This guarantees that at $t+\Delta t$ a complete feasible solution to $\Pfitttzzr$ is available before any optimization is performed. Solvers can thus always be seeded with an initial feasible solution, and since all results in this paper do not rely on optimality, the solution progress can be interrupted at any time returning valid schedules. 

The quality of the solutions recovered with this approach depends on the quality of the heuristic utilized to move trains from safe state to safe state. The policy in Algorithm~\ref{alg:trivial_safe_to_safe} is evidently suboptimal. It could be improved, for instance, by moving all trains that do not interact which each other at the same time. Generally, we note that designing efficient movement schedules between safe states appears to be simpler than working with generic initial and terminal states. 



\subsection{Time-Wise Problem Decomposition}
\label{subsec:time-wise-decomp}

One important ramification of Theorem~\ref{thm:recursive_feasiblity} is that a feasible solution to $\Pfittzr$, for any arbitrarily long safe optimization horizon $\ff_\tim$, can be obtained by solving a sequence of smaller optimization models, each of which incrementally considers an additional portion of time. More precisely, if $(\overline{y}, \overline{z})$ is any (not necessarily optimal) feasible solution to $\Pfittzr$, then the values of $(\overline{y}, \overline{z})$ are also valid for the optimization problem $\Pfitzrtilde$, where $\tilde{f}_\tim \geq f_\tim$ as long as $\tilde{f}_\tim$ is safe. That is, the values of $(\overline{y}, \overline{z})$ can be forced unto the corresponding variables of $\Pfitzrtilde$ and the latter remains feasible.

This is true because a feasible solution to $\Pfitzrtilde$ can always be constructed by extending a solution to $\Pfittzr$ to any non-regressive safe horizon $\tilde{\ff}_\tim \geq \ff_\tim$ by application of the trivial policy in Algorithm~\ref{alg:trivial_safe_to_safe}, thus guaranteeing feasibility.

Since we are allowed to force all variables $(\overline{y}, \overline{z})$, feasibility guarantees are preserved when we elect to force only a subset of them; in particular, we can only force the binary variables $\overline{z}$. Further, since $(\overline{y}, \overline{z})$ drives the system into a safe state, which might not be an efficient network state, we can exclude variables close to the end of the optimization horizon of the current iteration from the variables forced on subsequent models. Finally, rather than forcing values, this procedure can be used to seed valid initial values for the binary variables in the model.


Note that the chances of time-wise decomposition working on extensive networks with large fleets are exceedingly low when not enforcing safe horizons. As traffic density increases, the likelihood of at least one train terminating at a node that impedes the transit of other trains in subsequent steps also increases (e.g., dwelling on a single slot node).


\subsection{Train-Wise Problem Decomposition}
\label{subsec:train-wise-decomp}


A somewhat unexpected consequence of the results in Section~\ref{sec:recursive_feasibility} is that, under certain provisions, it is possible to solve $\Pp$ by considering only portions of the train fleet at a time.  Namely, let $\Pp_{I_0}$ and $\Pp_{I_1}$ be instances of $\Pp$ only entailing trains in $I_0 \subseteq I$ and $I_1 \subseteq I$ respectively.
We discuss conditions ensuring that the corresponding sub--solutions $(\overline{z}_i)_{i \in I_0}$ and $(\overline{z}_i)_{i \in I_1}$ constitute a valid partial solution\footnote{It is only a partial solution since the model $\Pp_{I_0 \cup I_1}$ entails all conflicts between the trains in $I_0$ and $I_1$ that are not considered in the separate problems} to $\Pp_{I_0 \cup I_1}$. Note that we restrict this analysis to the binary variables $z$. 

Two specific procedures enabled by this result are as follows:
\begin{itemize}
	\item[i)] $I$ is \textit{partitioned} into non-overlapping subsets, i.e., $I_i \cap I_j = \emptyset$ for all partitions $I_i$ and $I_j$. This decomposition allows the construction of a partial feasible solution to $\Pp$ by solving the independent sub-models $\Pp_{I_i}$ in parallel.
	\item[ii)] $I$ is decomposed into incrementally larger subsets, i.e., $I_0 \subseteq I_{1} \subseteq \dots \subseteq I_N$. This decomposition produces solutions to $\Pp$ by considering subsets of trains that are progressively enlarged. If $I_N \equiv I$, this procedure computes the complete set of variables $z$ for $\Pp$.
\end{itemize}
In both cases, at each iteration the size of the problems to be solved are smaller than the full--scale model $\Pp$.

We start with an example illustrating that, as expected, this is generally not possible. We however also show how adjustments to boundary conditions can resolve the underlying issue.

\begin{example}
	\label{example:counterexample_trainwise_decomposition}
	Consider again the example depicted in Figure~\ref{fig:example_base}, and the corresponding graph in Figure~\ref{fig:counterexample_pathwise_decomposition}a, in which initial optimization horizons are shown. Under these circumstances, the only feasible sequence of train movements is to move $T_1$ to node $n_5$ first, then $T_3$ to $n_1$ and finally $T_2$ to $n_5$.
	Suppose, however, that procedure ii) is followed, with the following arbitrary sequence of subsets of $I$: 
	\begin{eqnarray*}
		I_0 = \left\{ T_2, T_3 \right\}, \quad I_1 = \left\{ T_1, T_2, T_3 \right\} \equiv I.
	\end{eqnarray*}
	The optimization model $\Pp_{I_0}$ is not aware of $T_1$, and it can thus elect to give precedence to $T_2$ over $T_3$, i.e., to set $\zedgearg{T_2, T_3, e_{45}} = 1$. Freezing this value for $\zedgearg{T_2, T_3, e_{45}}$ would render the subsequent optimization model $\Pp_{I_1}$ infeasible, where $T_1$ is introduced and $I_1$ is considered.
	Figure~\ref{fig:counterexample_pathwise_decomposition}b illustrates how setting $\zedgearg{T_2, T_3, e_{45}} = 1$ can be resolved in a feasible schedule when horizons are extended to terminate in a safe state. Note that the initial state of the trains is exactly the same as in Figure~\ref{fig:counterexample_pathwise_decomposition}a.
	\begin{figure}
		\centering
		\includegraphics[width=1\linewidth]{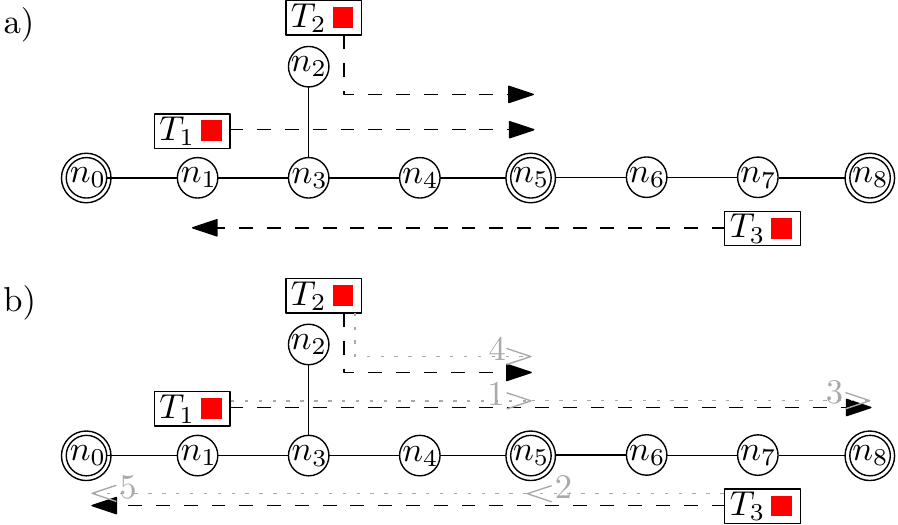}
		\caption{System state discussed in Example~\ref{example:counterexample_trainwise_decomposition}.}
		\label{fig:counterexample_pathwise_decomposition}
	\end{figure}
\end{example}

To see how the desired result might be possible more generally, we first observe that movements of individual trains are almost entirely independent of each other in Algorithm~\ref{alg:trivial_safe_to_safe}. Note that the algorithm assumes that initial and final states are safe. The only coupling between trains in the policy occurs when a train $i$ is moved to its destination node $n$, resulting in all slots in $n$ to be occupied. In this case the policy, as presented, does enforce a specific transit order for scheduling trains by requiring $j$, another train at node $n$ that hasn't been moved yet, to be moved next. Note, however, that it would be possible to rectify this by delaying the departure of all -- or any subset -- of the trains already processed ($I \setminus I^{\mathrm{open}}$) and move $j$ first. The node at which $j$ arrives can itself then be fully occupied, but as before, a train that hasn't moved yet must exist at this node and hence the same procedure can be re-applied. These recursive iterations must terminate because the number of trains that haven't been moved yet is finite.

This demonstrates that the policy can be adapted to return a train transit schedule by processing trains in any sequence and/or independently of each other, provided boundary conditions adhere to safe state requirements. It thus follows that precedences in instances of $\Pp$, in which initial and final states are safe, can be solved by considering conflicts of subsets of trains in any order and, hence, both decomposition schemes mentioned above can be applied. The modified procedure does, however, also require the ability to modify $y$ through iterations, which is why the analysis in this section is exclusively valid for $z$.

Example~\ref{example:counterexample_trainwise_decomposition} violated the assumption on boundary conditions, both for the initial as well the final states. Adjusting the terminal conditions was sufficient to recover feasibility. It is generally possible to make this adjustment whenever optimization horizons can be stretched far enough to reach a safe state, which is always possible under the assumption of infinite capacity at the terminals.

To guarantee that the procedure succeeds in all cases, however, we need to address initial conditions as well.

%

One approach is to run Algorithm~\ref{alg:horizon_computation}, which outputs a minimal safe horizon $\ff_{\tim}$ together with a feasible solution to $\Pfittzr$, and only consider the output $\ff_\tim$. Solve problem $\Pfitzrtilde$ for any safe $\tilde{\ff}_{\tim} \geq \ff_{\tim}$ using either procedure i) or ii) but, at each iteration, only freeze optimization variables indexed from $\ff_{\tim}$ onward; all other variables, which concern the schedule from the trains' current position to the horizon $\ff_{\tim}$, need to be left open as optimization variables. They can, however, be seeded with the values obtained in previous iterations, which, as noted above, will often be a valid initialization point.

This is guaranteed to work because running Algorithm~\ref{alg:horizon_computation} ensures that a feasible schedule exists from the trains' current position to $\ff_\tim$. As long as the existence of at least one solution is guaranteed, the model can be extended from that point using either procedure i) or ii) into a feasible solution to $\Pfitzrtilde$, for any arbitrary $\tilde{f}_\tim \geq \ff_\tim$ that is safe.

An alternative approach is to first construct a feasible schedule from the trains' current state into a safe state. One way to obtain this is to run Algorithm~\ref{alg:horizon_computation} and consider both the horizon $\ff_{\tim}$ as well as the feasible solution to $\Pfittzr$. We can then solve the problem $\Pfitzrtilde$ to any arbitrary $\tilde{f}_\tim \geq \ff_\tim$ by following procedures i) or ii). Note that this approach, however, forces the system to pass through the safe state determined by solving $\Pfittzr$.


It is interesting to remark that this approach works independently of the extent of the network and the complexity of its topology. It is also independent of the train fleet size. The only relevant factors are the initial and terminal conditions, the approach works for any arbitrary complexity degree of traffic patterns between those boundary conditions.

One final note is that the quality of the schedules obtained with this model decomposition depends on the size and sequence of the subsets of $I$ used in the iterations. 


\section{Numerical Experiments and Results}
\label{sec:results}


Two networks were implemented for the numerical experiments presented in this section. One synthetic network comprised of 27 nodes, displayed in Figure~\ref{fig:synthetic_net}, and another network with 69 nodes that models the railway system operating in the Pilbara region for freight transport of mineral ore, depicted in Figure~\ref{fig:pb_net}. The travel times over the edges for the synthetic network with 27 nodes are randomly distributed between 5 and 20 minutes. We cannot disclose travel times used for the network with 69 nodes.

\begin{figure}
	\centering
	\begin{subfigure}[c]{0.45\textwidth}
	\centering
	\includegraphics[width=1\linewidth]{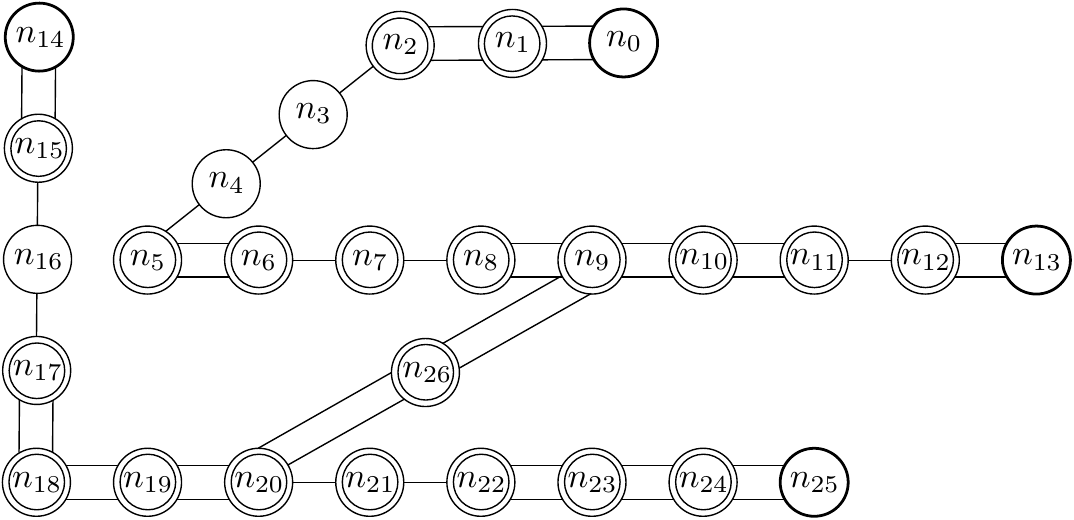}
	\caption{Layout of the synthetic network with 27 nodes. Nodes with single thin circle have one slot, double circles have two slots and single thicker circle have infinite capacity.}
	\vspace{0.25cm}
	\label{fig:synthetic_net}
	\end{subfigure}
	\hspace{0.25cm}
	\begin{subfigure}[c]{0.45\textwidth}
	\centering
	\includegraphics[width=1\linewidth]{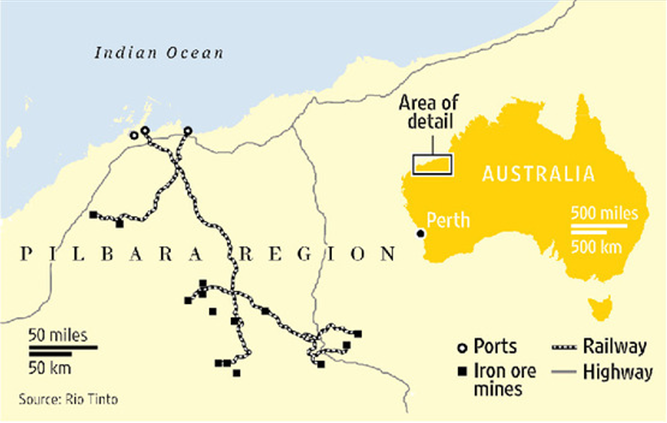}
	\caption{Pilbara railway system used to inspire the model with 69 nodes.}
	\label{fig:pb_net}
	\end{subfigure}
	\caption{Layouts of the networks considered in the experimental section.}
	\label{fig:network_layouts}
\end{figure}

We perform computations varying the number of trains present in the network to assess the sensitivity of computations to traffic levels. For the network with 27 nodes, we consider 10 (moderate traffic), 20 (high traffic) and 30 trains (very high traffic---more trains than nodes). For the 69-node network, we consider 30 and 50 trains. For each network and train number combination, we create 500 random initial positions of trains.
For each random initial condition, we solve $\Pp$ using several algorithms based on the finding presented in the previous section:


\textbf{Time-wise decomposition}. In \textit{time-wise} decompositions, we utilize the results in Section~\ref{subsec:time-wise-decomp}, see Figure~\ref{fig:timewise_decomposition}. The optimization model is split into segments of 30 and 60 minutes, that is, we optimize the model considering a number of edges that is increased at each step in a way that ensures that the total unimpeded travel time is increased by at least 30 or 60 minutes for each train, and extend those further to accommodate for finite, safe horizons. We also consider a variant (``relaxation'') where at each step we relax enforcement of binary variables for the last 15 minutes of the previous solution but, instead, use them only as initialization point.

\textbf{Train-wise decomposition.} In \textit{train-wise} decompositions, we utilize the results from Section~\ref{subsec:train-wise-decomp}, see Figure~\ref{fig:pathwise_decomposition}. The ``incremental'' version refers to variant i), while ``partitions'' corresponds to variant ii). We run experiments with varying sizes of the train subsets considered at each step. To make comparisons fair, since the ``partitions'' strategy only recovers a partial solution to $\Pp$, we perform a last step in which that partial solution is enforced into the full model $\Pp$ to retrieve a complete solution. The trains selected to be within the next subset at each iteration are chosen randomly.

\textbf{Monolithic.} In the \textit{monolithic} version, $\Pp$ is solved as a single optimization model until the incumbent solution has a guaranteed optimality gap of less than 0.1\% or 120 seconds have elapsed, whichever occurs first.

The results of these experiments are presented in Figures~\ref{fig:results_synthetic} and~\ref{fig:results_pilbara}, for 27 and 69 nodes networks respectively. All optimizations are performed using Gurobi~\cite{gurobi} on a PC with 16GB of RAM, an Intel i7-6700K CPU clocking at 4.00GHz running on Linux Ubuntu 16.04.4 LTS.  Computation times presented in the results only consider the time spent performing optimizations (total Gurobi runtime), as the time spent setting up the various algorithms is highly dependent on implementation details and does not entail exponential complexities (as opposed to solving mixed-integer models). Computation times are floored at 0.01 sec to enable logarithmic plotting.
 
For the synthetic network and moderate traffic case (10 trains), all methods quickly ($\leq 0.1$ sec) solve the model to optimality in the vast majority of cases. For cases with higher traffic, it is possible to distinguish a trade-off between computation time and solution quality. Approaches with higher compute times are generally producing higher quality solutions and vice-versa. The monolithic variant tends to produce solutions with lowest optimality gaps, while median compute times for incremental variants of train-wise decompositions are two orders of magnitude faster, while retaining (in median) optimality gaps of 5\% or less. 

Computational constraints for the network with 69 nodes make this a more challenging set of instances, especially the experiments involving 50 trains. For this case, the monolithic approach caps at the maximum allowed compute time of 120 seconds in the majority of instances, and presents several outliers with high optimality gaps. Incremental train--wise decompositions with 1 and 5 trains per subset significantly outperform this approach in terms of worst-case optimality gap while being more than two orders of magnitude faster in terms of median computation times.

The results indicate that problems' hardness is very strongly related to the traffic density in the network: significant increases in compute times can be observed for all algorithms on both networks as the number of trains is increased.  In particular, median compute times for the monolithic variant grow in excess of an order of magnitude at each higher level of traffic density for both networks. Note also that compute times for the synthetic network with 30 trains are approximately an order of magnitude slower than for the 69 nodes network with the same number of trains. This is to be expected as the number of conflicts (and hence binary variables) grows with increased interactions between the trains.

Comparing time-wise decompositions, we note that performing relaxations drastically improves solutions' quality, while retaining median compute times that are approximately one order of magnitude faster than the monolithic approach for the cases with the highest traffic. Improved quality is likely due to the fact that solutions are not forced through a safe state at the end of each solution step.

Train-wise decomposition with partitions tend to require more computations than the incremental variant and this is mainly due to the last step, in which a full solution is computed from a partial one. All prior steps, involving separate and independent partitions, compute very quickly.

\begin{figure*}
	\centering
	\begin{subfigure}[c]{0.32\textwidth}
		\centering
		\includegraphics[width=1\linewidth]{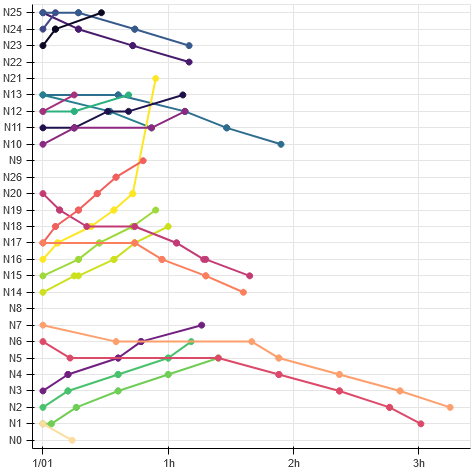}
	\end{subfigure}
	\hspace{0.25cm}
	\begin{subfigure}[c]{0.32\textwidth}
		\centering
		\includegraphics[width=1\linewidth]{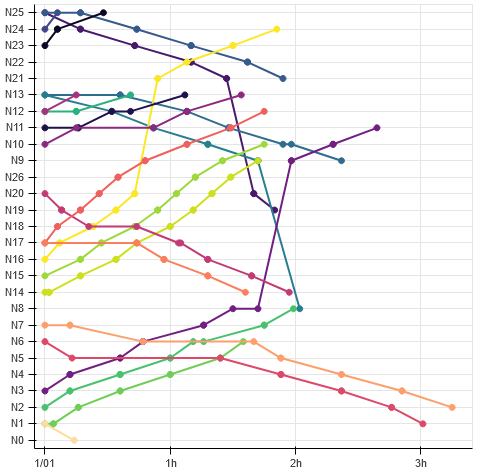}
	\end{subfigure}
	\begin{subfigure}[c]{0.32\textwidth}
		\centering
		\includegraphics[width=1\linewidth]{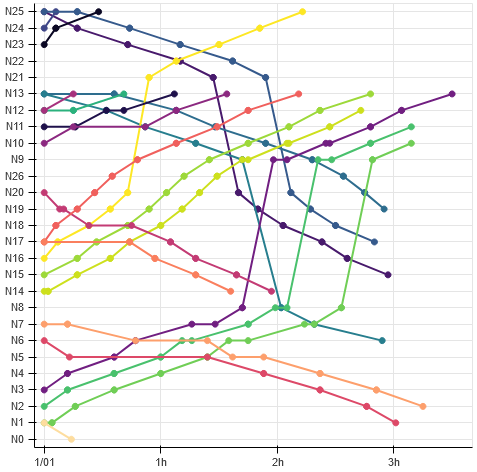}
	\end{subfigure}
	\caption{Three iterations of the time-wise decomposition solution approach. At each step, the movements schedule is extended by at least 60 minutes. Note in the first iteration how the horizon for the trains departing from $N6$ and $N7$ is extended further than the rest: after 60 minutes, they would occupy $N5$ and $N6$, both of which have two slots but are already terminal for the trains departing from $N1$ and $N2$. Nodes $N3$ and $N4$ have only one slot so they cannot function as terminal nodes. Horizons are consequently extended up to $N1$ and $N2$, both of which have two slots and are not terminal for other trains.}
	\label{fig:timewise_decomposition}
\end{figure*}

\begin{figure*}
	\centering
	\begin{subfigure}[c]{0.32\textwidth}
		\centering
		\includegraphics[width=1\linewidth]{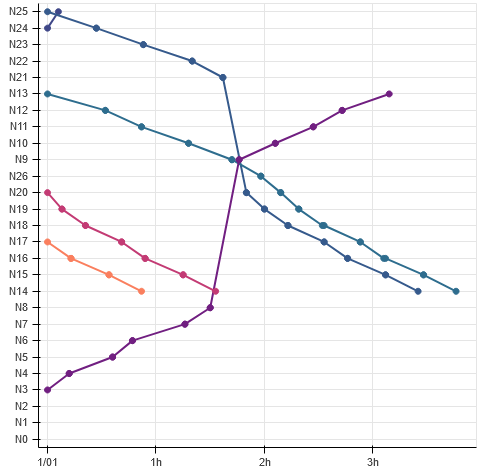}
	\end{subfigure}
	\hspace{0.25cm}
	\begin{subfigure}[c]{0.32\textwidth}
		\centering
		\includegraphics[width=1\linewidth]{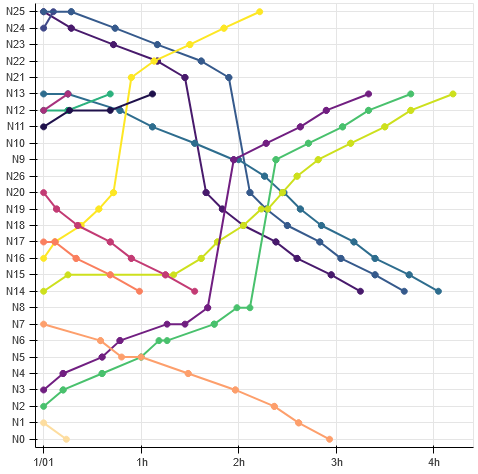}
	\end{subfigure}
	\begin{subfigure}[c]{0.32\textwidth}
		\centering
		\includegraphics[width=1\linewidth]{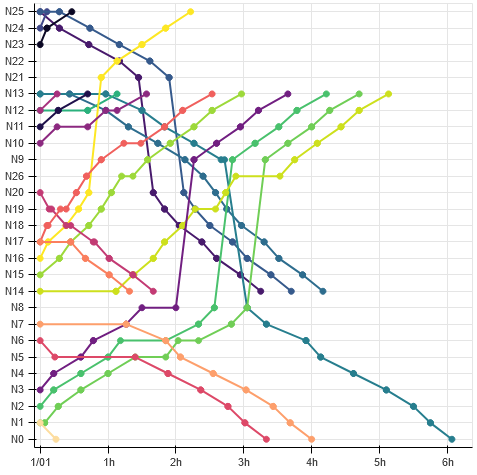}
	\end{subfigure}
	\caption{Three iterations of the train-wise decomposition solution approach. At each iteration, an additional subset of trains is added to the model while previously established precedences are frozen.}
	\label{fig:pathwise_decomposition}
\end{figure*}

\begin{figure*}[t]
	\begin{subfigure}[b]{1.0\textwidth}
		\centering
		\includegraphics[width=1\textwidth]{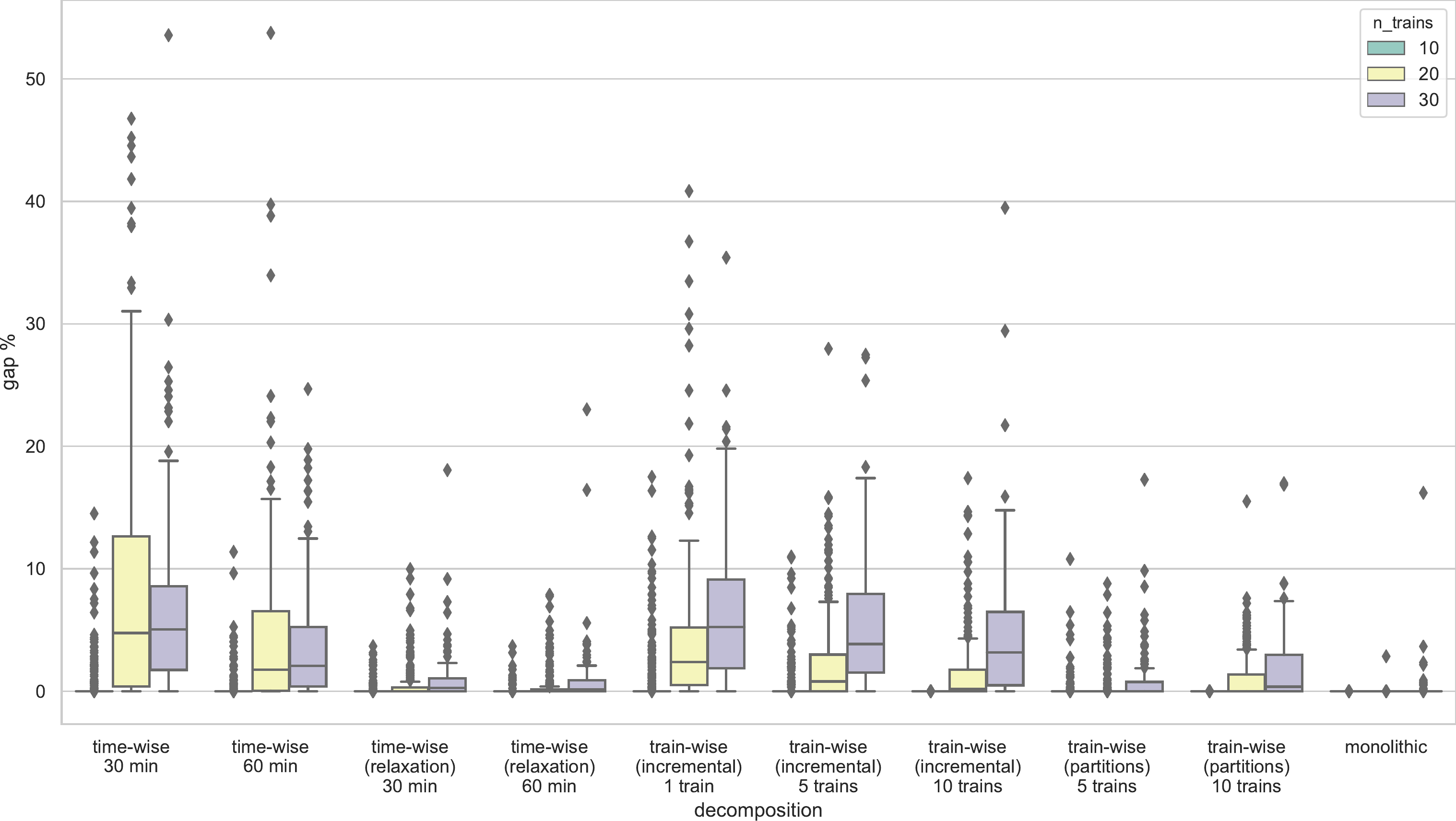}		
		\vspace{0.25cm}
	\end{subfigure}
	\begin{subfigure}[b]{1.0\textwidth}
		\centering
		\includegraphics[width=1\textwidth]{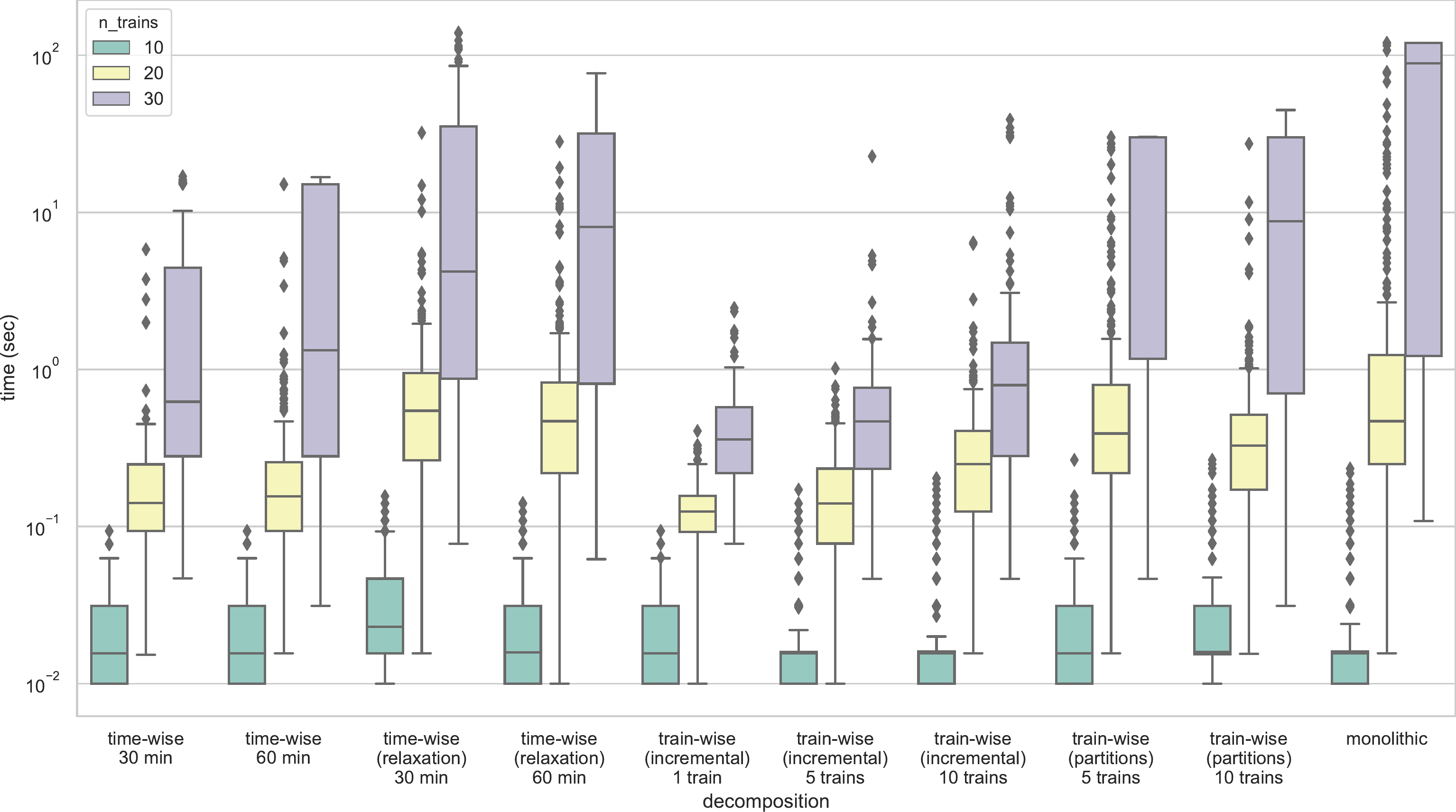}		
		\vspace{0.25cm}
	\end{subfigure}
	\caption{Results for the synthetic network with 27 nodes.}
	\label{fig:results_synthetic}
\end{figure*}

\begin{figure*}[t]
	\begin{subfigure}[b]{1.0\textwidth}
		\centering
		\includegraphics[width=1\textwidth]{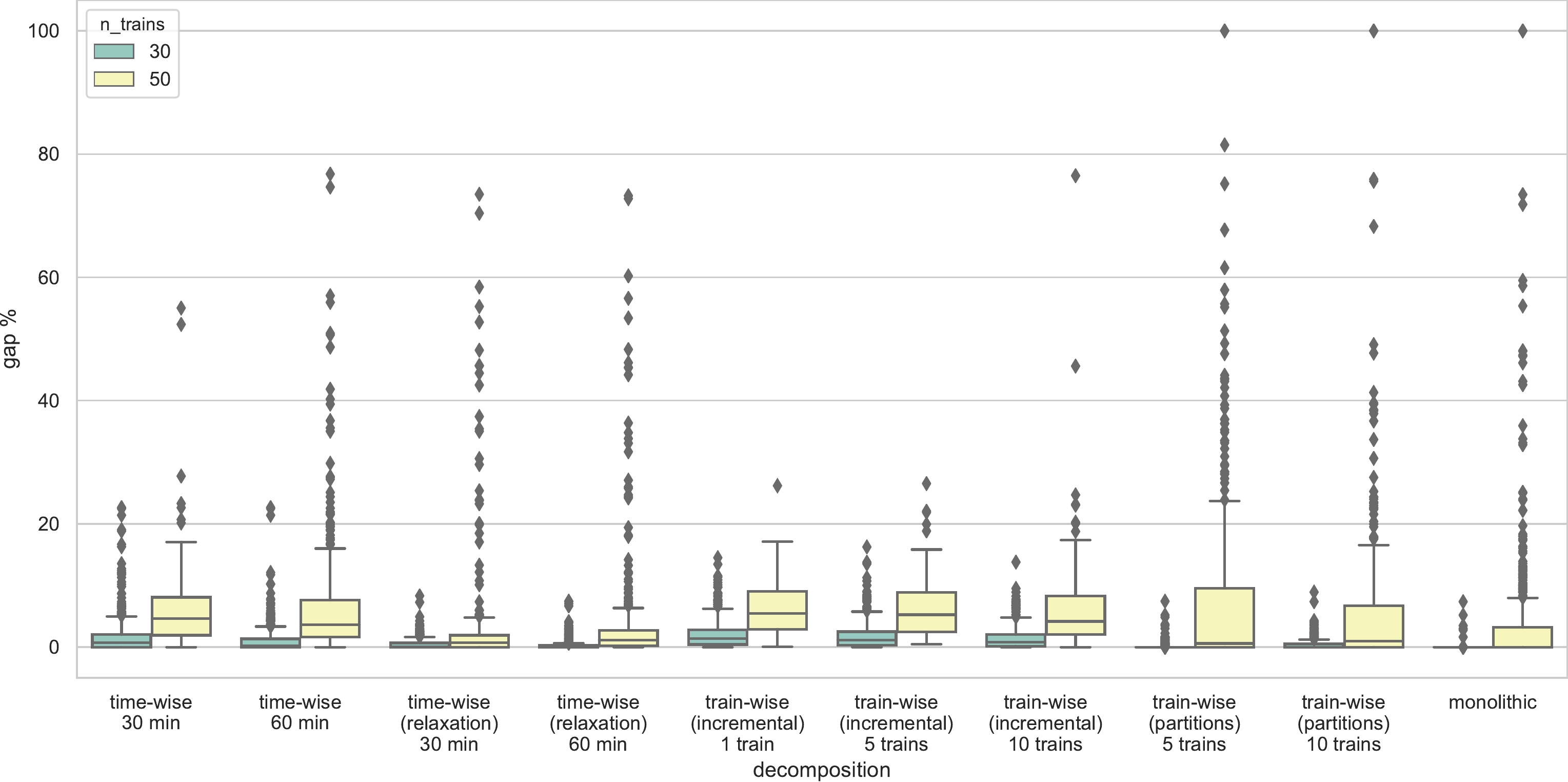}		
	\end{subfigure}
	\begin{subfigure}[b]{1.0\textwidth}
		\centering
		\includegraphics[width=1\textwidth]{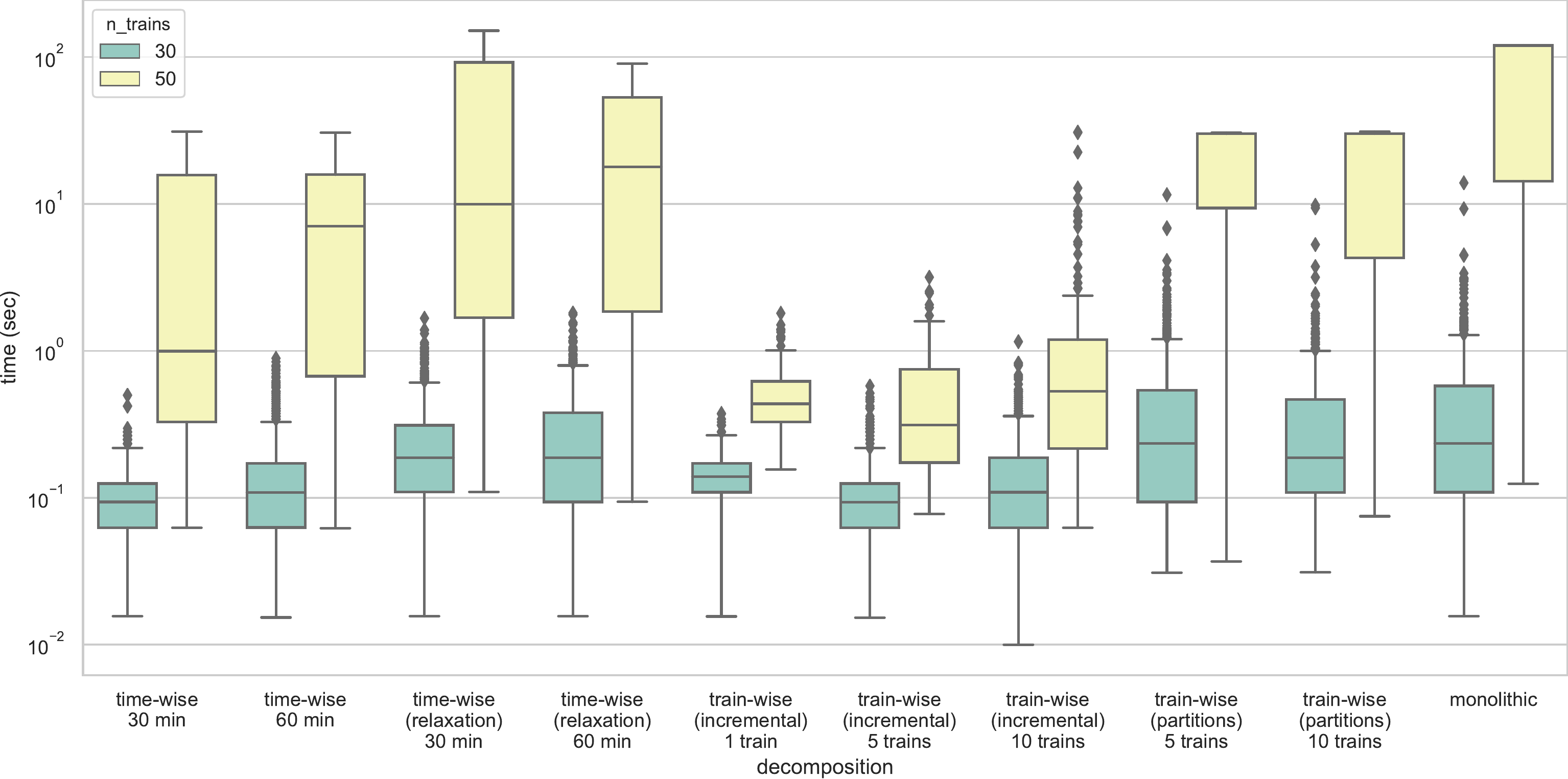}		
	\end{subfigure}
	\caption{Results for network with 69 nodes.}
	\label{fig:results_pilbara}
\end{figure*}

\section{Conclusions}

This paper presented a mathematical optimization model for the optimization of traffic in railway systems focusing on the operation of such optimization model in a receding horizon framework. Theoretical guarantees are provided that determine the required optimization horizons necessary to operate the system in a deadlock-free manner. Several important strategies for computational complexity reduction that follow from these ideas are presented. The effectiveness of these procedures is demonstrated through extensive computational experimentation that shows that in larger networks with high traffic density median computation times can be reduced by more than two orders of magnitude while retaining lower worst--case optimality gaps than solving the same models as monolithic MILPs.

We note that the analysis in Section~\ref{sec:computational_ramifications_rf} does not depend on the algorithms utilized to solve the underlying optimization models. 
Hence, the ideas presented can be useful in contexts in which train schedules are computed using different solution methods. In this context, it is worth noting that the fundamental idea of a safe state might be relevant also for other systems that do not satisfy the assumptions made in Section~\ref{sec:opt_model_static}. The specific definition of safe states introduced in this paper may not be suitable for these systems, but alternatives may be devised. This is an open research topic. Further, the interplay between the results presented in this paper and cases where the railway system is driven according to an underlying timetable and where online control and optimization is rather used to address disturbances is also an open area of investigation.

Future avenues of research include investigations of more efficient algorithms to move trains between safe states and extensions to avoid the assumption of infinite capacity at the terminals. Procedures to address system disturbances (e.g., delayed travel times) as well as imperfect information are also of utmost practical importance and they could be investigated using approaches from control systems theory, e.g. robust model predictive control~\cite{morari_robust_mpc}. Aspects related to the selection strategies of train subsets in train-wise decompositions are also an interesting line of further research. Finally, questions on whether it is possible to provide performance guarantees for the solutions produced by any of the optimization procedures presented in the paper are also to be further analyzed.

\textbf{Acknowledgments.} This work has been supported by the Australian Centre for Field Robotics and the Rio Tinto Centre for Mine Automation. The Authors would like to especially thank Shaun Robertson, Rio Tinto Growth \& Innovation, for his guidance throughout the project on practical aspects of railway systems.

\bibliographystyle{amsplain}
\bibliography{mybibliography}

%
%
%

\end{document}